\newcommand{\abbr}[1]{{\sc\lowercase{#1}}}
\providecommand{\U}[1]{\protect\rule{.1in}{.1in}}
\newtheorem{theorem}{Theorem}
\newtheorem{condition}[theorem]{Condition}
\newtheorem{corollary}[theorem]{Corollary}
\newtheorem{definition}[theorem]{Definition}
\newtheorem{lemma}[theorem]{Lemma}
\newtheorem{proposition}[theorem]{Proposition}
\newtheorem{remark}[theorem]{Remark}
\newenvironment{proof}[1][Proof]{\noindent\textbf{#1.} }{\ \rule{0.5em}{0.5em}}
\newcommand{\B}{\mathbb{B}}
\newcommand{\E}{\mathbb{E}}
\newcommand{\N}{\mathbb{N}}
	\renewcommand{\P}{\mathbb{P}}
\newcommand{\R}{\mathbb{R}}
\newcommand{\wt}{\widetilde}
\newcommand{\ovl}{\overline}
\newcommand{\ep}{\epsilon}
\newcommand{\cD}{\mathcal{D}}
\newcommand{\cL}{\mathcal{L}}
\newcommand{\cM}{\mathcal{M}}
\newcommand{\cO}{\mathcal{O}}
\newcommand{\cP}{\mathcal{P}}
\begin{document}
\title{The KPP equation as a scaling limit \\
of locally interacting Brownian particles
\footnote{Keywords and phrases: Fisher-KPP equation; scaling limits; interacting diffusions; local interaction, proliferation.} 
\footnote{AMS subject classification (2020): 82C22, 82C21, 60K40.}}
\author{Franco Flandoli and Ruojun Huang}
\date{}
\maketitle

\begin{abstract}
Fisher-KPP equation is proved to be the scaling limit of a system of Brownian
particles with local interaction. Particles proliferate and die depending on
the local concentration of other particles. Opposite to discrete models,
controlling concentration of particles is a major difficulty in Brownian
particle interaction; local interactions instead of mean field or moderate
ones makes it more difficult to implement the law of large numbers properties. The approach taken here to overcome these difficulties is largely inspired by A.
Hammond and F. Rezakhanlou \cite{HR} implemented there in the mean free path case
instead of the local interaction regime. 
\end{abstract}

\section{Introduction}
We consider the scaling limit (in a ``local" interaction regime) of the empirical measure of an interacting Brownian particle system in $\R^d$, $d\ge 1$, describing the proliferation mechanism of cells, where only approximately a constant number of particles interact with a given particle at any given time. We connect the evolution of its empirical measure process to the Fisher-KPP (\abbr{\abbr{F-KPP}}) equation. 

The \abbr{\abbr{F-KPP}} equation is related to particle systems in several ways. One of
them is the probabilistic representation by branching processes, see \cite{McK}
which originated a large literature. Others have to do with scaling limits of
interacting particles. In the case of discrete particles occupying the sites
of a lattice, with local interaction some of the main classical works are \cite{DFL, DP, FLT}; see also the recent contributions \cite{DFPV, FT}.

The discrete setting, as known for many other systems (see for instance \cite{KL}) offers special opportunities due to the simplicity of certain invariant or
underlying measures, often of Bernoulli type; the technology in that case has
become very rich and deeply developed. Different is the case of interacting
diffusions, less developed. The mean field theory, for diffusions, is a
flexible and elegant theory \cite{Szn} but localizing the interactions is very
difficult, see for instance \cite{Va, Uch} as few of the attempts. When the
interaction is {\it moderate}, namely intermediate between mean field and local, there are
more technical opportunities, widely discovered by K. Oelschl\"ager in a series
of works including \cite{Oel2, Oel}. Along these lines, also the
\abbr{\abbr{F-KPP}} equation has been obtained as a scaling limit in \cite{FLO}. Let
us mention also \cite{MR, BM, Met, NO, Ste}  for
related works. 

In the present work we fill in the gap and prove that the \abbr{\abbr{F-KPP}} equation
is also the scaling limit of diffusions, locally interacting. In a sense, this
is the analog of the discrete particle results of \cite{DFL} and the other references
above. The proof is not based on special reference measures of Bernoulli type
as in the discrete case (not invariant in the present proliferation case, but
still fundamental), but it is strongly inspired by the work of \cite{HR}, which
deals with locally interacting diffusions in the mean-free-path regime, that
we adapt to the local regime (the former requires that a particle meets a
finite number of others, on average, in a unit amount of time; the latter
requires that a particle has a finite number of others, on average, in its
own neighborhood, where interaction takes place). Compared to the discrete
setting \cite{DFL}, where the dynamics is a superposition of simple-exclusion process
(which leads to the diffusion operator) and spin-flip dynamics (leading to
the reaction term) and the number of particles per site is either zero or one, we
have to worry about concentration of particles, one of the main difficulties
for the investigation of interacting diffusions. 

After this short introduction to the subject, let us give some more technical
details. We start with the formal definition of a closely-related model already studied in \cite{FLO} in the so-called moderate interaction regime. Then we introduce our slightly altered model.
\begin{definition}
A configuration of the system is a sequence
\[
\eta=\left(  x_{i},a_{i}\right)  _{i\in\mathbb{N}}\in\left(  \mathbb{R}%
^{d}\times\left\{  L,N\right\}  \right)  ^{\mathbb{N}}%
\]
with the following constraint: there exists $i_{\max}\left(  \eta\right)
\in\mathbb{N}$ such that $a_{i}=L$ for $i\leq i_{\max}\left(  \eta\right)  $,
$a_{i}=N$ and $x_{i}=0$ for $i>i_{\max}\left(  \eta\right)  $. 
\end{definition}

The heuristic meaning is that particles with index $i\leq i_{\max}\left(
\eta\right)  $ exist, are alive ($=L$), and occupy position $x_{i}$; particles
with $i>i_{\max}\left(  \eta\right)  $ do not exist yet ($=N$), but we
formally include them in the description; they are placed at $x_{i}=0$.

Test functions $F$ are functions on $\left(  \mathbb{R}^{d}\times\left\{
L,N\right\}  \right)  ^{\mathbb{N}}$ which depend only on a finite number of
coordinates, $F=F\left(  x_{1},...,x_{n},a_{1},...,a_{n}\right)  $ with
$\left(  x_{i},a_{i}\right)  \in\mathbb{R}^{d}\times\left\{  L,N\right\}  $
and are smooth in $\left(  x_{1},...,x_{n}\right)  \in\mathbb{R}^{dn}$. 

\begin{definition}
The infinitesimal generator $\mathcal{L}_{N}$, parametrized by $N\in
\mathbb{N}$, is given by%
\begin{align}\label{gen-original}
\left(  \mathcal{L}_{N}F\right)  \left(  \eta\right)  =\sum_{i\leq i_{\max
}\left(  \eta\right)  }\frac{1}{2}\Delta_{x_{i}}F\left(  \eta\right)
+\sum_{j\leq i_{\max}\left(  \eta\right)  }\lambda_{N}^{j}\left(  \eta\right)
\left[  F\left(  \eta^{j}\right)  -F\left(  \eta\right)  \right]
\end{align}
where, if $\eta=\left(  x_{i},a_{i}\right)  _{i\in\mathbb{N}}$, then $\eta
^{j}=(  x_{i}^{j},a_{i}^{j})  _{i\in\mathbb{N}}$ is given by
\begin{align*}
(  x_{i}^{j},a_{i}^{j})    & =\left(  x_{i},a_{i}\right)  \text{
for }i\neq i_{\max}\left(  \eta\right)  +1\\
\big(  x_{i_{\max}\left(  \eta\right)  +1}^{j},a_{i_{\max}\left(
\eta\right)  +1}^{j}\big)    & =\left(  x_{j},L\right)  .
\end{align*}
The rate $\lambda_{N}^{j}\left(  \eta\right)  $ is given by
\begin{align}\label{rate-original}
\lambda_{N}^{j}\left(  \eta\right)  =\Big(  1-\frac{1}{N}\sum_{k\leq i_{\max
}\left(  \eta\right)  }\theta_{N}\left(  x_{j}-x_{k}\right)  \Big)  ^{+}%
\end{align}
where $\theta_{N}$ are smooth compact support mollifiers with a rate of
convergence to the delta Dirac at zero specified in the sequel. 
\end{definition}

The heuristic behind this definition is that: i) existing particles move at
random like independent Brownian motions;\ ii) a new particle could be created at the position of an existing particle $j$, with rate proportional to the empty space in a neighborhood of $x_{j}$, neighborhood described by the support of
$\theta_{N}$. Our aim is to choose the scaling of $\theta_{N}$, namely the neighborhood of interaction, such that only a small finite number of particles different from $j$ are in that neighborhood. 

\medskip

In the classical studies of continuum interacting particle systems, where interactions are modulated by a potential, one usually takes 
\begin{align*}
\theta_N(x)=N^{\beta}\theta(N^{\beta/d}x)
\end{align*}
for some smooth compactly supported function $\theta(\cdot)$, where $N$ is the order of the number of particles in the system. The case $\beta=0$ is called mean-field, since all particles interact with each other at any given time. The case $\beta\in(0,1)$ is called moderate, as not all particles interact at any given time, nevertheless such number is diverging with $N$. The case $\beta=1$ is called local, as one would expect that in a neighborhood of radius $N^{-1/d}$, only a constant number of particles interact. Of course, here we are assuming that particles are relatively homogeneously distributed in space at all times down to the microscopic scale (which is not always proven). For the system with generator \eqref{gen-original}, the moderate scaling regime with $\beta\in(0,1/2)$ has been studied and its scaling limit to \abbr{F-KPP} equation established in \cite{FLO}, with earlier results \cite{Oel} for a shorter range of $\beta$, and so is the mean-field case whose limit is a different kind of equation \cite{CM,FL}; our aim here is to study the local regime, subject to a modification of the rate \eqref{rate-original}. 

We introduce a scale parameter $\ep\in(0,1]$ describing the length scale where two particles can interact. In particular, in the local regime, $\ep=N^{-1/d}$, but our result is more general, covering also the whole moderate regime (\cite{FLO, Oel}). Then we consider the mollifier 
\begin{align*}
\theta^\ep(x):=\ep^{-d}\theta(\ep^{-1}x)
\end{align*}
built from a given, nonnegative, H\"older continuous and compactly supported function $\theta:\R^d\to\R_+$ with $\int \theta =1$. The rate of proliferation \eqref{rate-original} can be written as 
\begin{align}\label{rate}
\lambda^j_N(\eta)=\Big(1-\frac{1}{N}\sum_{k\le i_{\text{max}}(\eta)}\theta^\ep(x_j-x_k)\Big)^+
\end{align}
whereby the proliferation part of the generator is
\begin{align*}
(\cL_CF)(\eta)&:=\sum_{j\le i_{\text{max}}(\eta)}\Big[1-\frac{1}{N}\sum_{k\le i_{\text{max}}(\eta)}\theta^\ep(x_j-x_k)\Big]^+[F(\eta^j)-F(\eta)].
\end{align*}
Throughout the paper any sum is only over particles alive in the system (whose cardinality is always finite), hence we do not discuss the label $a_j$ of particle $j$. 

Heuristically, the positive part on the rate \eqref{rate} should be insignificant, as we would guess that if starting with a density profile not larger than $1$, then subsequently the density of particles everywhere is no larger than $1$. This is the case for the \abbr{F-KPP} equation (see \eqref{fkpp} below). However, at the microscopic level we do not have effective control on the scale of $\ep$, even a posteriori. Hence in this paper we consider a slightly altered model, namely one without the positive part in the rate. 

Note that now the proliferation rate can be negative, which we will interpret to mean, in terms of the proliferation part of the generator,
\begin{align}\label{pr-ge}
(\wt\cL_CF)(\eta)&=\sum_{j\le i_{\text{max}}(\eta)}[F(\eta^j)-F(\eta)]+\frac{1}{N}\sum_{j, k\le i_{\text{max}}(\eta)}\theta^\ep(x_j-x_k)[F(\eta^{-j})-F(\eta)],
\end{align}
where $\eta^{-j}$ signifies deleting particle $j$ from the collection $\eta$. Thus, the infinitesimal generator of our particle system under study is
\begin{align}
(\wt \cL_N F)(\eta)=\sum_{j\le i_{\text{max}}(\eta)}\frac{1}{2}\Delta_{x_i}F(\eta)+(\wt\cL_CF)(\eta).
\end{align}

\begin{condition}\label{kpp-init}
The function $u_0$ appearing as initial condition satisfies:\\
(a). It is compactly supportly in $\B(0,R)$, an open ball of radius $R$ around the origin.\\
(b). $0\le u_0(x)\le \gamma$ for some finite constant $\gamma$ and all $x\in\R^d$.
\end{condition}
In particular, $u_0\in L^1_+(\R^d)$ (the space of nonnegative integrable functions), with $\|u_0\|_{L^1}\le \gamma |\B(0, R)|$. Denoting by $\eta(t)$ the collection of alive particles at time $t\ge 0$, and $N(t)$ its cardinality, we distribute at time $t=0$, 
\[
i_{max}(\eta(0))=N_0:=N\int_{\R^d} u_0
\]
number of points independently with identical probability density $(\int u_0)^{-1} u_0$ on $\R^d$, for some $u_0$ satisfying Condition \ref{kpp-init}. In particular, $u_0(x)dx$ is the weak limit, in probability, of the initial (normalized) empirical measure:
\begin{align}\label{initial}
\frac{1}{N}\sum_{j\le N_0}\delta_{x_j(0)}(x)\stackrel{w}{\Rightarrow} u_0(x)dx.
\end{align}
We introduce the sequence of space-time (normalized) empirical measures 
\begin{align}\label{emp-meas}
\xi^N\left(dt, dx\right):=\frac{1}{N}dt\sum_{j\le N(t)}\delta_{x_j(t)}(dx), \quad  N\in\N,
\end{align}
taking values in the space $\cM=\cM_T$ of nonnegative finite measures on $[0, T]\times\R^d$ endowed with the weak topology. Since these are random measures, the mappings
\begin{align*}
\omega\mapsto \xi^N(dt,dx,\omega)
\end{align*}
induce probability measures $\cP^N$ on the space $\cM$. That is, $\cP^N\in\cP(\cM)$.

We also introduce the \abbr{\abbr{F-KPP}} equation on $[0,T]\times\R^d$:
\begin{align}\label{fkpp}
\partial_t u= \frac{1}{2}\Delta u + u-u^2, \quad u(0,\cdot)=u_0,
\end{align}
with $u_0$ satisfying Condition \ref{kpp-init}.
We take as weak formulation of \eqref{fkpp} that for any $\phi\in C_c^{1,2}([0,T)\times\R^d)$,
\begin{align}\label{weak-informal}
0&=\int_{\R^d}u_0(x)\phi(0,x)dx+\int_0^T\int_{\R^d}u(t,x)\partial_t\phi(t,x)\, dxdt  \nonumber\\
&+\int_0^T\int_{\R^d}\left[\frac{1}{2}u(t,x)\Delta\phi(t,x)+(u-u^2)(t,x)\phi(t,x)\right]dxdt\,.
\end{align}
More precisely, see Definition \ref{weak-sol}.
The choice of this weak formulation, together with the uniqueness of the \abbr{F-KPP} equation is discussed in Section \ref{uniqueness}. Our main result is the following theorem, which is proved in Section \ref{sec:main}.

\begin{theorem}\label{thm-main}
Suppose that $\ep=\ep(N)$ is such that $\ep^{-d}\le CN$ for some finite constant $C$ and $\ep(N)\to0$ as $N\to\infty$. Then, for every finite $T$ and $d\ge 1$, the sequence of probability measures $\{\cP^N\}_N$ induced by $\{\omega\mapsto\xi^N(dt, dx, \omega)\}_N$ converges weakly in the space $\cP(\cM)$ to a Dirac measure on $\xi(dt, dx)\in\cM$. The measure $\xi$ is absolutely continuous with respect to the Lebesgues measure on $[0,T]\times\R^d$, i.e. $\xi(dt, dx)=u(t,x)dtdx$. The density $u(t,x)$ is the unique weak solution to the \abbr{\abbr{F-KPP}} equation \eqref{fkpp}, in the sense of \eqref{weak-informal}.
\end{theorem}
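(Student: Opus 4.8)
The plan is to follow the classical martingale route for hydrodynamic limits, the non-classical input being an a priori control of the \emph{microscopic} concentration of particles in the spirit of \cite{HR}. First I would apply Dynkin's formula to $\eta\mapsto\langle\mu^N_t,\phi\rangle:=\frac1N\sum_{j\le N(t)}\phi(x_j(t))$ for $\phi\in C_c^\infty(\R^d)$ (and later for time-dependent $\phi$, which brings in a $\partial_t\phi$ term as in \eqref{weak-informal}), where $\mu^N_t$ is the time-$t$ marginal of the space--time measure, $\xi^N(dt,dx)=dt\,\mu^N_t(dx)$. Using $\wt\cL_N$ this gives
\[
\langle\mu^N_t,\phi\rangle=\langle\mu^N_0,\phi\rangle+\int_0^t\langle\mu^N_s,\tfrac12\Delta\phi\rangle\,ds+\int_0^t\langle\mu^N_s,\phi\rangle\,ds-\int_0^t\frac1{N^2}\sum_{j,k\le N(s)}\theta^\ep(x_j(s)-x_k(s))\phi(x_j(s))\,ds+M^{N,\phi}_t,
\]
with $M^{N,\phi}$ a martingale whose predictable bracket is $O(1/N)$ (Brownian part of order $\frac1{N^2}\sum_j|\nabla\phi(x_j)|^2$, jump part of order $\frac1{N^2}\big[N(s)+\frac1N\sum_{j,k}\theta^\ep(x_j-x_k)\big]$). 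Writing the last integrand as $\langle\mu^N_s,\phi\,(\theta^\ep*\mu^N_s)\rangle$, this is the discrete analogue of \eqref{weak-informal}, and the task is to pass to the limit: show $M^{N,\phi}_t\to0$ and handle the nonlinear term.

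Second I would prove the a priori bounds. Taking $\phi\equiv1$ and applying Gronwall yield $\E[N(t)]\le N_0 e^t\le C_T N$, and in fact
\[
\int_0^T\E\Big[\iint_{\R^d\times\R^d}\theta^\ep(x-y)\,\mu^N_s(dx)\,\mu^N_s(dy)\Big]\,ds\le C_T,
\]
a uniform ``smoothed $L^2$'' bound furnished for free by the death mechanism (here the hypothesis $\ep^{-d}\le CN$ keeps the diagonal $\frac1{N^2}\sum_j\theta^\ep(0)$ bounded); higher moments of $N(t)$ follow similarly. One also needs the finer correlation estimates of \cite{HR} type: a bound on the two-point correlation function --- equivalently, on the expected number of $\ep$-close (and $\delta$-close) pairs of particles --- showing that close pairs are no more abundant than a bounded density would predict. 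Together with the diffusive, Gaussian-tailed spreading of the support (new particles are born at positions of existing ones, which move as Brownian motions), these give tightness of $\{\cP^N\}$ in $\cP(\cM)$, while the smoothed $L^2$ bound forces any limit point $\xi$ to be a.s. absolutely continuous, $\xi(dt,dx)=u(t,x)\,dt\,dx$ with $u\in L^2([0,T]\times\R^d)$.

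Third I would identify the limit. Along a subsequence with $\cP^N\Rightarrow\cQ$, the linear terms pass to the limit from $\mu^N\to u\,dx$ and $\|\theta^\ep*\phi-\phi\|_\infty\to0$, and $M^{N,\phi}\to0$ by the $O(1/N)$ bracket and Doob. The crux is the nonlinear term, i.e. $\int_0^T\langle\mu^N_s,\phi\,(\theta^\ep*\mu^N_s)\rangle\,ds\to\int_0^T\int_{\R^d}u^2\phi$. I would insert an intermediate scale $\delta$ with $\ep\ll\delta\ll1$ and argue: (i) the correlation bounds control $\E\big|\int_0^T\langle\mu^N_s,\phi\,(\theta^\ep*\mu^N_s-\theta^\delta*\mu^N_s)\rangle\,ds\big|$ uniformly in $N$, by a quantity vanishing as $\delta\to0$; (ii) for each fixed $\delta$, $\theta^\delta*\mu^N_s\to\theta^\delta*u=:u_\delta$ locally uniformly, hence $\int_0^T\langle\mu^N_s,\phi\,u_\delta\rangle\,ds\to\int_0^T\int\phi\,u\,u_\delta$, the mollifier now being fixed; (iii) $u_\delta\to u$ in $L^2$ as $\delta\to0$. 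Combined with uniqueness of the weak solution of \eqref{fkpp} (Section \ref{uniqueness}), every limit point equals the same deterministic $u$, so $\cP^N$ converges weakly to $\delta_\xi$, which is Theorem \ref{thm-main}.

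The main obstacle is step (i). In the local regime $\ep\asymp N^{-1/d}$ the smoothing radius is comparable to the typical interparticle spacing, so each $\ep$-ball carries only $O(1)$ particles and no law of large numbers is available there; one cannot simply replace $\theta^\ep*\mu^N$ by the local density. Ruling out microscopic clumping --- obtaining the correlation-function and close-pair estimates that make the $\ep$-to-$\delta$ comparison quantitative --- is the delicate ingredient adapted from \cite{HR}, and is where the bulk of the analysis is concentrated.
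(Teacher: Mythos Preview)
Your skeleton---Dynkin/It\^o decomposition, martingale vanishing, tightness, identification of the nonlinear term via an intermediate scale $\delta$, then uniqueness---matches the paper's architecture. The gap is in your step~(i), which you correctly flag as the crux but describe only as ``correlation-function and close-pair estimates.'' The paper does \emph{not} bound $\theta^\ep*\mu^N-\theta^\delta*\mu^N$ directly via pair correlations. Instead it uses an It\^o--Tanaka trick: it introduces the auxiliary function $r^\ep$ solving the backward heat equation $\partial_t r^\ep+\Delta r^\ep+\theta^\ep=0$, $r^\ep(T,\cdot)=0$, and applies It\^o's formula to the functional $\frac1{N^2}\sum_{j,k}\big[r^\ep(t,x_j-x_k+z)-r^\ep(t,x_j-x_k)\big]\phi(x_j)\psi(x_k)$. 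The drift then produces exactly $\theta^\ep(x_j-x_k)-\theta^\ep(x_j-x_k+z)$, so the problem of comparing scales is converted into controlling sums of $r^\ep$ and $\nabla r^\ep$, which (unlike $\theta^\ep$) have only mild, integrable singularities (Proposition~\ref{diff-est}). These sums are then estimated by coupling to a pure-branching system and reducing to moments of independent Brownian motions (Proposition~\ref{progenitor}). Only after this does one average over $z$ against $\eta^\delta$ to reach $(\eta^\delta*_x\xi^N)^2$. Your direct $\theta^\ep$-to-$\theta^\delta$ comparison, without this detour through $r^\ep$, has no obvious mechanism in the local regime: $\theta^\ep-\theta^\delta$ does not gain regularity, and pair-correlation bounds alone do not seem to deliver a quantity vanishing with~$\delta$.

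A second, smaller gap: you extract only $u\in L^2$ from the smoothed-$L^2$ bound, but the uniqueness proof in Section~\ref{uniqueness} (a Gronwall in $L^\infty$ on the mild formulation) requires $u\in L^\infty$. The paper obtains this separately (Lemma~\ref{density}) by applying It\^o to $\int\psi(\eta^\delta*_x\xi^N)\,dx$ for a convex $\psi$ vanishing below a threshold $k$, and then using a Chernoff bound on the branching-Brownian-motion majorant to show the super-level set $\{f^\delta>k\}$ has negligible mass. Your ``smoothed $L^2$ forces absolute continuity'' is too weak as stated, both because $\ep$ is tied to $N$ (so the bound does not directly survive the limit) and because $L^2$ is insufficient for the uniqueness step you invoke.
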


\section{Proof of the main result}\label{sec:main}
Our proof is based on adapting the strategy of \cite{HR}, which deals with scaling limits to coagulation-type \abbr{PDE}s. The key to the proof of the main result is an It\^o-Tanaka trick, well-known in the setting of \abbr{SDE}s. Specifically, for every $\ep$ and $T$, we define an auxiliary function $r^\ep(t,x)=r^{\ep, T}(t,x): [0,T]\times\R^d\to\R_+$, which is the unique solution to the \abbr{PDE} terminal value problem:
\begin{align}
\begin{cases}
\partial_t r^\ep(t,x) +\Delta r^\ep(t,x) + \theta^\ep(x)=0\\
r^\ep(T,x)=0
\end{cases}.\label{kolm}
\end{align}

Denoting by $C_0$ the maximum radius of the compact support of $\theta$, we have the following estimates for $r^\ep$ and $\nabla r^\ep$. 

\begin{proposition}\label{diff-est}
There exists finite constant $C(d, T, C_0)$ such that 
for any $x\in\R^d,\ep>0, t\in[0,T]$ we have that 
\begin{align}
&|r^\ep(t, x)|\le 
\begin{cases}\label{unif-bd}
Ce^{-C|x|^2}1_{\{|x|\ge 1\}}+C\left(|x|\vee \ep\right)^{2-d}1_{\{|x|<1\}}, \quad d\neq2 \\\\
Ce^{-C|x|^2}1_{\{|x|\ge 1\}}+C|\log\left(|x|\vee \ep\right)|1_{\{|x|<1\}}, \quad d=2
\end{cases}
\end{align}
\begin{align}
&|\nabla_x r^\ep(t,x)|\le Ce^{-C|x|^2}1_{\{|x|\ge 1\}}+C\left(|x|\vee \ep\right)^{1-d}1_{\{|x|<1\}}, \quad d\ge 1.\label{unif-bd-grad}
\end{align}
\end{proposition}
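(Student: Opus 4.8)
\medskip
\noindent The plan is to write $r^\ep$ explicitly through Duhamel's formula and then reduce both bounds to pointwise estimates on time-truncated heat kernels. Reversing time in \eqref{kolm}, i.e.\ setting $w(s,x):=r^\ep(T-s,x)$, turns the terminal-value problem into the forward problem $\partial_sw=\Delta w+\theta^\ep$, $w(0,\cdot)=0$, whose unique bounded solution is given by Duhamel's formula, so that
\begin{align*}
r^\ep(t,x)=\int_0^{T-t}(G_\sigma*\theta^\ep)(x)\,d\sigma\ \ge\ 0,\qquad G_\sigma(x):=(4\pi\sigma)^{-d/2}e^{-|x|^2/(4\sigma)},
\end{align*}
and, differentiating under the integral (legitimate since $\int_0^T\|\nabla G_\sigma\|_{L^1}\,d\sigma<\infty$), $\nabla_xr^\ep(t,x)=\int_0^{T-t}(\nabla G_\sigma*\theta^\ep)(x)\,d\sigma$; as $\theta$ is only H\"older continuous, we keep the derivative on the Gaussian rather than on $\theta^\ep$. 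Since the integrands are dominated in absolute value by nonnegative functions and $T-t\le T$, it suffices to bound
\begin{align*}
\overline r^\ep(x):=\int_0^T(G_\sigma*\theta^\ep)(x)\,d\sigma=\int_{\R^d}\theta^\ep(y)\,K(x-y)\,dy,\qquad K(z):=\int_0^TG_\sigma(z)\,d\sigma,
\end{align*}
and, with $K_1(z):=\int_0^T|\nabla G_\sigma(z)|\,d\sigma=\int_0^T\tfrac{|z|}{2\sigma}G_\sigma(z)\,d\sigma$, the analogous quantity $\overline g^\ep(x):=\int_{\R^d}\theta^\ep(y)\,K_1(x-y)\,dy$.

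\medskip
\noindent The first step is pointwise control of $K$ and $K_1$. For $|z|\le1$: if $d\ge3$ then $K(z)\le\int_0^\infty G_\sigma(z)\,d\sigma=c_d|z|^{2-d}$, and the substitution $u=|z|^2/(4\sigma)$ gives $\int_0^\infty\tfrac{|z|}{2\sigma}G_\sigma(z)\,d\sigma=c_d|z|^{1-d}$, an integral which in fact converges in \emph{every} dimension $d\ge1$, so that $K_1(z)\le c_d|z|^{1-d}$ in all dimensions (a bounded quantity when $d=1$). If $d=2$ the integral $\int_0^\infty G_\sigma$ diverges, so one keeps the truncation: the same substitution yields $K(z)=(4\pi)^{-1}\int_{|z|^2/(4T)}^\infty u^{-1}e^{-u}\,du\le C(1+|\log|z||)$ for $|z|\le1$. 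If $d=1$, simply $K(z)\le\int_0^T(4\pi\sigma)^{-1/2}\,d\sigma=C\sqrt T$. For $|z|\ge1$, writing $e^{-|z|^2/(4\sigma)}\le e^{-|z|^2/(8T)}e^{-|z|^2/(8\sigma)}$ for $\sigma\le T$ and bounding the residual $\sigma$-integral by its value over $(0,\infty)$ (finite in every dimension, since $|z|\ge1$) yields $K(z)+K_1(z)\le Ce^{-c|z|^2}$.

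\medskip
\noindent The second step assembles these, using $\int\theta^\ep=1$, $\mathrm{supp}\,\theta^\ep\subseteq\overline{B(0,C_0\ep)}$ and $\|\theta^\ep\|_\infty=\ep^{-d}\|\theta\|_\infty$; one may take $\ep\le1$, the range $\ep\gtrsim1$ being absorbed into the constant. If $|x|\ge1$: for $|x|\ge2C_0$ every $y$ in the support of $\theta^\ep$ satisfies $|x-y|\ge|x|/2$, and the far-field bound gives $\overline r^\ep(x)+\overline g^\ep(x)\le Ce^{-c|x|^2}$; on the bounded shell $1\le|x|\le2C_0$, $\overline r^\ep$ and $\overline g^\ep$ are bounded by the $|z|\le1$ estimates while $e^{-c|x|^2}$ is bounded below, so the same bound persists. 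If $|x|<1$, split the mass of $\theta^\ep$: when $|x|\ge2C_0\ep$ one again has $|x-y|\ge|x|/2$ on the support, whence $\overline g^\ep(x)\le C|x|^{1-d}$ and $\overline r^\ep(x)\le C|x|^{2-d}$ (for $d=2$ replace $|x|^{2-d}$ by $1+|\log|x||$, and for $d=1$ by a constant); when $|x|<2C_0\ep$ the convolution runs over $|z|\le3C_0\ep$, so $\overline g^\ep(x)\le\ep^{-d}\|\theta\|_\infty\int_{|z|\le3C_0\ep}K_1(z)\,dz\le C\ep^{1-d}$ and $\overline r^\ep(x)\le\ep^{-d}\|\theta\|_\infty\int_{|z|\le3C_0\ep}K(z)\,dz$, which is $\le C\ep^{2-d}$ for $d\ge3$ and $\le C(1+|\log\ep|)$ for $d\le2$; here we used $\int_{|z|\le\rho}|z|^{2-d}\,dz=c_d\rho^2$, $\int_{|z|\le\rho}|z|^{1-d}\,dz=c_d\rho$ and the analogue $\int_{|z|\le\rho}(1+|\log|z||)\,dz\le C\rho^d(1+|\log\rho|)$ for $d=2$. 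Combining the two sub-cases, and after the evident cosmetic adjustments near $|x|=1$ and in $d=1$, this produces \eqref{unif-bd} and \eqref{unif-bd-grad} with $|x|\vee\ep$ in place of $|x|$.

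\medskip
\noindent The one genuinely delicate point is producing the Gaussian far-field decay \emph{together} with the sharp local singularity: this is exactly why one cannot replace $\int_0^T$ by $\int_0^\infty$ in dimensions $d\le2$ and must split the Gaussian exponent as above. The merely H\"older regularity of $\theta$ is what forces the derivative onto $G_\sigma$; the useful fact there is that $\int_0^\infty\tfrac{|z|}{2\sigma}G_\sigma(z)\,d\sigma=c_d|z|^{1-d}$ converges in \emph{all} dimensions, making the gradient estimate \eqref{unif-bd-grad} uniform in $d$. Everything else is the routine near-/mid-/far-field bookkeeping together with the scaling of $\theta^\ep$.
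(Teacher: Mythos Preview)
Your argument follows essentially the same route as the paper: write $r^\ep$ via Duhamel, bound the truncated heat potential $K(z)=\int_0^TG_\sigma(z)\,d\sigma$ and its gradient analogue pointwise, then convolve with $\theta^\ep$ via a near/far split. Your treatment of the convolution step (explaining explicitly how $|x|\vee\ep$ appears) is in fact more detailed than the paper's, which only states the outcome.

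There is one small slip to repair. In the far-field you bound the residual $\sigma$-integral by extending to $(0,\infty)$ and declare it ``finite in every dimension, since $|z|\ge1$''. This is true for $K_1$ (indeed $\int_0^\infty\tfrac{|z|}{2\sigma}G_\sigma(z)\,d\sigma=c_d|z|^{1-d}$ in all $d$), but for $K$ the integral $\int_0^\infty(4\pi\sigma)^{-d/2}e^{-|z|^2/(8\sigma)}\,d\sigma$ diverges at $\sigma=\infty$ when $d\le2$ --- precisely the obstruction you yourself flag in the last paragraph for the near-field. The fix is immediate: simply do not extend, and bound the $(0,T)$-integral directly for $|z|\ge1$ (for $d=1$ drop the Gaussian to get $2\sqrt T$; for $d=2$ substitute $u=|z|^2/(8\sigma)$ and use $|z|^2/(8T)\ge 1/(8T)$ to get $\int_{1/(8T)}^\infty u^{-1}e^{-u}\,du<\infty$). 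With this correction the proof is complete.
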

\begin{proof}
We first demonstrate \eqref{unif-bd}. Write%
\[
r^{\epsilon}\left(  t,x\right)  =u^{\epsilon}\left(  T-t,x\right)
\]%
with
\begin{align*}
\begin{cases}
\partial_{t}u^{\epsilon}\left(  t,x\right)     =\Delta u^{\epsilon}\left(
t,x\right)  +\theta^{\epsilon}\left(  x\right) \\
u^{\epsilon}\left(  0,x\right)    =0.
\end{cases}
\end{align*}
For each fixed $\ep>0$, the function $u^{\epsilon}\left(  t,x\right)  $ is of class $C^{1,2}([0,T]\times\R^d)$ (since we assumed that $\theta\in C^\alpha(\R^d)$ for some $\alpha\in(0,1)$) and it is given by the explicit formula%
\[
u^{\epsilon}\left(  t,x\right)  =\int_{0}^{t}\int_{\mathbb{R}^{d}}%
p_{t-s}\left(  x-y\right)  \theta^{\epsilon}\left(  y\right)  dyds
\]
where
\[
p_{t}\left(  x\right)  :=\left(  4\pi t\right)  ^{-d/2}\exp\left(
-\frac{\left\vert x\right\vert ^{2}}{4t}\right)  \qquad\text{for }t>0.
\]
We also have%
\begin{align*}
u^{\epsilon}\left(  t,x\right)   &  =\int_{0}^{t}\int_{\mathbb{R}^{d}}%
p_{s}\left(  y\right)  \theta^{\epsilon}\left(  x-y\right)  dyds\\
&  =\int_{\mathbb{R}^{d}}\theta^{\epsilon}\left(  x-y\right)  \left[  \int%
_{0}^{t}p_{s}\left(  y\right)  ds\right]  dy.
\end{align*}
This reformulation is crucial to understand the ``singularity" of $u^{\epsilon
}$ (let us repeat it is smooth, but it becomes singular at $x=0$ when
$\epsilon\rightarrow0$). Call%
\begin{align}\label{green}
K\left(  t,x\right)  :=\int_{0}^{t}p_{s}\left(  x\right)  ds
\end{align}
the kernel of this formula, such that
\[
u^{\epsilon}\left(  t,x\right)  =\int_{\mathbb{R}^{d}}\theta^{\epsilon}\left(
x-y\right)  K\left(  t,y\right)  dy.
\]
For $x\neq0$ the function $K\left(  t,x\right)  $ is well defined and smooth:
notice that $s\mapsto p_{s}\left(  x\right)  $ is integrable at $s=0$, and on
any set $\left[  0,T\right]  \times \ovl\cO  $ with $0\not\in\ovl\cO$, the function $p_{s}\left(  x\right)  $ is
uniformly continuous with all its derivatives in $x$ (extended equal to zero
for $t=0$). But for $x=0$ it is well defined only in dimension $d=1$.

We have, for $x\neq0$,
\begin{align*}
K\left(  t,x\right)   &  =\int_{0}^{t}\left(  4\pi s\right)  ^{-d/2}%
\exp\left(  -\frac{\left\vert x\right\vert ^{2}}{4s}\right)  ds\\
&  \stackrel{r=\frac{\left\vert x\right\vert ^{2}}{4s}}{=}\int_{\frac
{\left\vert x\right\vert ^{2}}{4t}}^{\infty}\left(  \frac{\pi\left\vert
x\right\vert ^{2}}{r}\right)  ^{-d/2}\exp\left(  -r\right)  \frac{\left\vert
x\right\vert ^{2}}{4r^{2}}dr\\
&  =\frac{1}{4\pi^{d/2}\left\vert x\right\vert ^{d-2}}\int_{\frac{\left\vert
x\right\vert ^{2}}{4t}}^{\infty}r^{\frac{d-4}{2}}\exp\left(  -r\right)  dr.
\end{align*}
Therefore%
\begin{align*}
K\left(  t,x\right)   &  =\frac{1}{\left\vert x\right\vert ^{d-2}}G\left(
t,x\right)  \\
G\left(  t,x\right)   &  :=\frac{1}{4\pi^{d/2}}\int_{\frac{\left\vert
x\right\vert ^{2}}{4t}}^{\infty}r^{\frac{d-4}{2}}\exp\left(  -r\right)  dr.
\end{align*}
Since
\begin{align*}
G\left(  t,x\right)   &  :=\frac{1}{4\pi^{d/2}}\int_{\frac{\left\vert
x\right\vert ^{2}}{4t}}^{\infty}r^{\frac{d-4}{2}}\exp\left(  -r\right)  dr\\
&  \leq\frac{1}{4\pi^{d/2}}\int_{\frac{\left\vert x\right\vert ^{2}}{4T}%
}^{\infty}r^{\frac{d-4}{2}}\exp\left(  -r\right)  dr\\
&  \overset{d\neq2}{\leq}A\exp\left(  -\alpha\left\vert x\right\vert
^{2}\right)  +B \left\vert x\right\vert   ^{d-2}1_{\{|x|<1\}}%
\end{align*}%
\[
G\left(  t,x\right)  \overset{d=2}{\leq}A\exp\left(  -\alpha\left\vert
x\right\vert ^{2}\right)  -B\log \left\vert x\right\vert 1_{\{|x|<1\}}
\]
for some $A,B,\alpha>0$. Therefore%
\begin{align*}
&  K\left(  t,x\right)  \overset{d\neq2}{\leq}\frac{1}{\left\vert x\right\vert
^{d-2}}\left[  A\exp\left(  -\alpha\left\vert x\right\vert ^{2}\right)
+B  \left\vert x\right\vert  ^{d-2}1_{\{|x|<1\}}\right]  \\
&  \leq\frac{1}{\left\vert x\right\vert ^{d-2}}A\exp\left(  -\alpha\left\vert
x\right\vert ^{2}\right)  +B1_{\{|x|<1\}}
\end{align*}%
\[
K\left(  t,x\right)  \overset{d=2}{\leq}A-B\log  \left\vert x\right\vert
1_{\{|x|<1\}} .
\]
It follows%
\begin{align*}
 u^{\epsilon}\left(  t,x\right)  &\overset{d\neq2}{\leq}A\int_{\mathbb{R}%
^{d}}\theta^{\epsilon}\left(  x-y\right)  \frac{1}{\left\vert y\right\vert
^{d-2}}\exp\left(  -\alpha\left\vert y\right\vert ^{2}\right)  dy+ B1_{\{|x|<1\}}\\
& \leq Ce^{-C|x|^2}1_{\{|x|\ge 1\}}+C\left(|x|\vee \ep\right)^{2-d}1_{\{|x|<1\}}
\end{align*}%
\[
u^{\epsilon}\left(  t,x\right)  \overset{d=2}{\leq}Ce^{-C|x|^2}1_{\{|x|\ge 1\}}+C|\log \left(|x|\vee \ep\right)|1_{\{|x|<1\}}.
\]
for some $C>0$.

To prove \eqref{unif-bd-grad}, we note that 
\begin{align*}
\nabla_x u^\ep(t,x)&=\int_0^t\int_{\R^d}\nabla p_s(y)\theta^\ep(x-y)dyds\\
&=\int_{\R^d}\theta^\ep(x-y)\left[\int_0^t\nabla p_s(y)ds\right]dy.
\end{align*}
Since 
\[
|\nabla p_t(x)|\le 2^{-1}(4\pi)^{-d/2}|x|t^{-\frac{d}{2}-1}\exp\left(-\frac{|x|^2}{4t}\right)
\]
we have that 
\begin{align*}
\left|\int_0^t\nabla p_s(y)ds\right|&\le  2^{-1}(4\pi)^{-d/2}\int_0^t|x|s^{-\frac{d}{2}-1}\exp\left(-\frac{|x|^2}{2}\right)ds\\
&\le 2^{-1}\pi^{-d/2}|x|^{1-d}\int_{\frac{|x|^2}{4T}}^\infty r^{\frac{d}{2}-1}e^{-r}dr\\
&\le |x|^{1-d}\left[A\exp(-\alpha|x|^2)+B|x|^d1_{\{|x|<1\}}\right]\\
&\le |x|^{1-d}A\exp(-\alpha|x|^2)+B1_{\{|x|<1\}}
\end{align*}
for some $A, B, \alpha>0$. Thus, we have
\begin{align*}
|\nabla_x u(t,x)|&\le A\int\theta^\ep(x-y)|y|^{1-d}\exp(-\alpha|y|^2)dy+B1_{\{|x|<1\}}\\
&\le Ce^{-C|x|^2}1_{\{|x|\ge 1\}}+C\left(|x|\vee \ep\right)^{1-d}1_{\{|x|<1\}}
\end{align*}
for some $C>0$. 
\end{proof}
\\\\
We need the following preliminary lemma.
\begin{lemma} \label{dm-theta}
For any $d\ge 1$ and finite $T$, there exists some finite $C=C(T, ||u_0||_{L^1})$ such that
\begin{align}
&\E\int_0^T\frac{1}{N^2}\sum_{j,k\le N(t)}\theta^\ep(x_j(t)-x_k(t))dt\le C.
\end{align}
\end{lemma}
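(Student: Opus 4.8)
The plan is to run an It\^o--Tanaka (Dynkin) argument with the auxiliary function $r^\ep$ of \eqref{kolm} as test function, the point being that $r^\ep$ ``de-singularizes'' $\theta^\ep$: acting on $r^\ep(t,x_j-x_k)$ with the two-body diffusion operator returns $-\theta^\ep(x_j-x_k)$ by \eqref{kolm}, while $r^\ep$ itself is only mildly singular. Two facts about $r^\ep$ are used: from the representation $r^\ep(t,x)=\int_0^{T-t}(p_s*\theta^\ep)(x)\,ds$ (cf.\ the proof of Proposition \ref{diff-est}) we have $r^\ep\ge0$, $r^\ep$ is nonincreasing in $t$, and $\int_{\R^d}r^\ep(t,\cdot)\le T$; and, by Proposition \ref{diff-est} together with $\ep^{-d}\le CN$, one has $r^\ep(t,0)\le r^\ep(0,0)\le CN$ in every dimension (for $d\ge3$, $\ep^{2-d}=\ep^2\ep^{-d}\le CN$; for $d=2$, $|\log\ep|\lesssim\log N$; for $d=1$ it is bounded). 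I would set
\[
F(t,\eta):=\frac{1}{N^2}\sum_{j,k\le i_{\max}(\eta)}r^\ep(t,x_j-x_k)\ge0,
\]
which vanishes identically at $t=T$.

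First I would compute $(\partial_t+\wt\cL_N)F$. For the diffusion--plus--time part: on an off-diagonal pair $j\neq k$ the operator $\partial_t+\tfrac12\Delta_{x_j}+\tfrac12\Delta_{x_k}$ gives $(\partial_t+\Delta)r^\ep(t,x_j-x_k)=-\theta^\ep(x_j-x_k)$, while the diagonal terms $j=k$ contribute $\frac{N(\eta)}{N^2}\partial_t r^\ep(t,0)\le0$ (monotonicity of $r^\ep$ in $t$). For $\wt\cL_C$: inserting a particle at $x_j$ increases $F$ by $\frac{1}{N^2}\big[\sum_q r^\ep(t,x_j-x_q)+\sum_p r^\ep(t,x_p-x_j)+r^\ep(t,0)\big]$, so $\sum_j[F(\eta^j)-F(\eta)]=2F(t,\eta)+\frac{N(\eta)}{N^2}r^\ep(t,0)$; and since $F(\eta^{-j})$ is a sum of nonnegative pair-terms over a smaller set of pairs, $F(\eta^{-j})-F(\eta)\le0$, so the death part is $\le0$ termwise. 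Altogether,
\[
(\partial_t+\wt\cL_N)F(t,\eta)\ \le\ -\frac{1}{N^2}\sum_{j\neq k}\theta^\ep(x_j-x_k)\ +\ 2F(t,\eta)\ +\ \frac{N(\eta)}{N^2}r^\ep(t,0).
\]

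Next, Dynkin's formula on $[0,T]$ with $F(T,\cdot)\equiv0$ gives
\[
\E\int_0^T\frac{1}{N^2}\sum_{j\neq k}\theta^\ep(x_j(t)-x_k(t))\,dt\ \le\ \E F(0,\eta(0))+2\E\int_0^T F(t,\eta(t))\,dt+\E\int_0^T\frac{N(t)}{N^2}r^\ep(t,0)\,dt,
\]
and I would bound each right-hand term by $C(T,\|u_0\|_{L^1})$. Since the initial particles are i.i.d.\ with density $u_0/\|u_0\|_{L^1}$, $\E F(0,\eta(0))\le\frac{N_0^2}{N^2}\frac{\|u_0\|_\infty}{\|u_0\|_{L^1}}\int_{\R^d}r^\ep(0,\cdot)+\frac{N_0}{N^2}r^\ep(0,0)\le C$. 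The count of live particles obeys $\wt\cL_N N(\eta)=N(\eta)-\frac1N\sum_{j,k}\theta^\ep(x_j-x_k)\le N(\eta)$, so $\E N(t)\le N_0 e^t$ by Gr\"onwall, whence $\E\int_0^T\frac{N(t)}{N^2}r^\ep(t,0)\,dt\le\frac{CN}{N^2}\int_0^T N_0e^t\,dt\le C$. For the middle term I would set $\psi(t):=\E F(t,\eta(t))$; Dynkin on $[0,t]$ together with the displayed inequality (discarding the nonpositive loss term) gives $\psi(t)\le\psi(0)+C+2\int_0^t\psi(s)\,ds$, so Gr\"onwall yields $\psi(t)\le C$ on $[0,T]$, uniformly in $N,\ep$ since $\psi(0)=\E F(0,\eta(0))\le C$, hence $\E\int_0^T F(t,\eta(t))\,dt\le CT$. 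This controls the off-diagonal sum; adding back the diagonal $\frac{1}{N^2}\sum_{j=k}\theta^\ep(0)=\frac{N(t)}{N^2}\theta^\ep(0)\le\frac{CN(t)}{N}$ (using $\theta^\ep(0)\le\|\theta\|_\infty\ep^{-d}\le CN$, $\theta$ being bounded) and the bound on $\E N(t)$ once more gives the claim.

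The main obstacle is the self-referential term $2F$ produced by the proliferation operator, which makes a one-shot Dynkin estimate circular; the resolution is that $F$ itself obeys the \emph{same} linear differential inequality $\psi'\le2\psi+C$ after the sign-definite loss term is dropped, which Gr\"onwall closes without any a priori control of the pair density. The other delicate points are the two sign determinations --- that the diagonal time-derivative term and the death part of the generator are both $\le0$ --- which rely only on $r^\ep\ge0$ and its monotonicity in $t$, and the uniform-in-$\ep$ estimates $r^\ep(t,0)\le CN$ and $\int_{\R^d}r^\ep(t,\cdot)\le T$ from Proposition \ref{diff-est} under the standing hypothesis $\ep^{-d}\le CN$. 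Justification of Dynkin's formula itself is routine: for fixed $\ep$, $r^\ep$ and the relevant derivatives are bounded, and the Yule comparison gives finite moments of $N(t)$ on $[0,T]$.
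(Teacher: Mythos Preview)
Your argument is correct, but it is a substantially heavier machine than what the paper actually uses. The paper's proof is a two-line computation: apply It\^o's formula to the particle \emph{count} $N(t)$ itself. Since $\wt\cL_N N(\eta)=N(\eta)-\frac{1}{N}\sum_{j,k}\theta^\ep(x_j-x_k)$, taking expectations and rearranging gives
\[
\E\int_0^T\frac{1}{N}\sum_{j,k}\theta^\ep(x_j(t)-x_k(t))\,dt\ =\ \E N_0+\E\int_0^T N(t)\,dt-\E N(T)\ \le\ \E N_0+\E\int_0^T N(t)\,dt,
\]
and the right-hand side is dominated by the pure-birth (Yule) system, yielding $e^TN\|u_0\|_{L^1}$. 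Dividing by $N$ finishes the proof. No auxiliary function, no Gr\"onwall loop, no relation between $\ep$ and $N$ is needed, and the constant depends only on $T$ and $\|u_0\|_{L^1}$ as stated.

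By contrast, your route brings in the full It\^o--Tanaka apparatus with $r^\ep$ (the tool the paper reserves for the much harder Proposition \ref{product}), plus a self-closing Gr\"onwall for $\psi(t)=\E F(t,\eta(t))$. Two small costs of your approach: (i) you invoke the standing hypothesis $\ep^{-d}\le CN$ to control $r^\ep(t,0)$ and $\theta^\ep(0)$, so your proof is not uniform in $\ep$ the way the paper's is; (ii) your bound on $\E F(0,\eta(0))$ uses $\|u_0\|_\infty$, so your final constant is $C(T,\|u_0\|_{L^1},\|u_0\|_\infty)$ rather than $C(T,\|u_0\|_{L^1})$. Neither is fatal in context, but the elementary argument via $N(t)$ avoids both and is worth knowing.
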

\begin{proof}
By It\^o formula applied to the process $N(t)$ (the cardinality of alive particles) and taking expectation, the martingale vanishes and we get that
\begin{align*}
\E N(T)=\E N_0+\E\int_0^T\sum_{j\le N(t)}\left[1-\frac{1}{N}\sum_{k\le N(t)}\theta^\ep(x_j(t)-x_k(t))\right]dt
\end{align*}
implying that
\begin{align*}
\E\int_0^T\frac{1}{N}\sum_{j,k\le N(t)}\theta^\ep(x_j(t)-x_k(t))dt \le \E N_0+\E\int_0^TN(t)dt.
\end{align*}
The \abbr{RHS} is dominated from above by the same quantity calculated for a particle system with pure proliferation of unit rate (and no killing), and thereby is bounded by $e^TN\int u_0$.
\end{proof}
\\\\
We proceed to derive the limiting equation. Fixing any $\phi(t,x)\in C_c^{1,2}([0,T)\times \R^d)$, we consider the time dependent functional on $\eta$
\begin{align}
Q^N(t,\eta):=\frac{1}{N}\sum_{j\le i_{\text{max}}(\eta)}\phi(t, x_j).
\end{align}
By It\^o formula applied to the process $Q^N(t,\eta(t))$, we get that
\begin{align}\label{main-ito}
Q^N(T, \eta(T))-Q^N(0, \eta(0))=&\int_0^T \frac{1}{N}\sum_{j\le N(t)}\big(\partial_t+\frac{1}{2}\Delta_{x_j}\big)\phi(t,x_j(t))\, dt \nonumber\\
&+\int_0^T \frac{1}{N}\sum_{j\le N(t)}\Big[1-\frac{1}{N}\sum_{k\le N(t)}\theta^\ep(x_j(t)-x_k(t))\Big]\phi(t, x_j(t))\, dt +\wt M_T   \nonumber\\
=&\Big\langle \xi^N(dt, dx), \big(\partial_t+\frac{1}{2}\Delta+1\big)\phi(t,x)\Big\rangle   \nonumber\\
&- \int_0^T \frac{1}{N^2}\sum_{j,k\le N(t)}\theta^\ep(x_j(t)-x_k(t))\phi(t, x_j(t))\, dt +\wt M_T
\end{align}
where $\{\wt M_t\}$ is a martingale. We can readily control the martingale via its quadratic variation
\[
\mathbb E[\wt M_T^2]\le 4\int_0^T \mathbb E[A^{(1)}_t+A^{(2)}_t]\, dt 
\]
where
\begin{align*}
A^{(1)}_t:=&\frac{1}{N^2}\sum_{j\le N(t)}\left|\nabla_{x_j}\phi(t,x_j(t))\right|^2,\\
A^{(2)}_t:=&\frac{1}{N^2}\sum_{j\le N(t)}\Big[1+\frac{1}{N}\sum_{k\le N(t)}\theta^\ep((x_j(t)-x_k(t))\Big]\phi(t,x_j(t))^2.
\end{align*}
Since $\phi$ is a test function and $\E N(t)\le Ne^t\int u_0$, combined with Lemma \ref{dm-theta} we arrive at
\[
\E\int_0^T[A^{(1)}_t+A^{(2)}_t]\, dt \le \frac{C_{T,\phi}}{N}.
\]
Therefore, the martingale vanishes in $L^2(\P)$ (and in probability) in the limit $N\to\infty$. Further, since $\phi(T,\cdot)=0$, we have that $Q^N(T,\eta(T))=0$; whereas by our assumption on the initial condition, we have that $Q^N(0,\eta(0))\to\int\phi(0,x)u_0(x)dx$ in probability.
Regarding the last term of \eqref{main-ito}, i.e.
\begin{align}\label{key-term}
\int_0^T \frac{1}{N^2}\sum_{j,k\le N(t)}\theta^\ep\left(x_j(t)-x_k(t)\right)\phi\left(t, x_j(t)\right)\, dt,
\end{align}
we shall prove the following approximation in steps. To state it, let us denote 
\begin{align}\label{def:eta}
\eta^\delta(x):=\delta^{-d}\eta(\delta^{-1}x)
\end{align}
for a smooth, nonnegative, compactly supported function $\eta:\R^d\to\R_+$ with $\int \eta=1$. Fix also two smooth, compactly supported functions $\phi, \psi: \R^d\times[0, T)\to \R$. 
\begin{proposition}\label{product}
Suppose that $\ep=\ep(N)$ is such that $\ep^{-d}\le CN$ for some finite constant $C$ and $\ep(N)\to0$ as $N\to\infty$. Then, for any $d\ge 1$ and finite $T$, we have that
\begin{align*}
&\int_0^T\frac{1}{N^2}\sum_{j, k\le N(t)}\theta^\ep\left(x_j(t)-x_k(t)\right)\phi\left(t,x_j(t)\right)\psi\left(t,x_k(t)\right)dt\\
&=\int_0^T\int_{\R^d} \phi(t,w)\psi(t,w)\left(\xi^N*_x\eta^\delta\right)(t,w)^2dwdt+ Err(\epsilon, N, \delta)\,,
\end{align*}
for some error term that vanishes in the following limit
\[
\limsup_{\delta\to 0}\limsup_{N\to\infty}\; \E |Err(\epsilon,N, \delta)|=0, 
\]
and any $\eta:\R^d\to\R_+$ smooth, nonnegative, compactly supported with $\int \eta=1$. Here we used the shorthand
\[
\left(\xi^N*_x\eta^\delta\right)(t,w):=\frac{1}{N}\sum_{j\le N(t)}\eta^\delta(w-x_j(t)).
\]
\end{proposition}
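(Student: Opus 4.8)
Write $f(t,x):=\phi(t,x)\psi(t,x)$, $Y_{jk}(t):=x_j(t)-x_k(t)$, and $\mu^N_t:=\frac1N\sum_{j\le N(t)}\delta_{x_j(t)}$, so that $(\xi^N*_x\eta^\delta)(t,\cdot)=\mu^N_t*\eta^\delta$. The strategy is to transfer the interaction from the microscopic scale $\ep$ to the mesoscopic scale $\delta$ by an It\^o--Tanaka argument of the type used for \eqref{kolm}. Two elementary reductions come first. Since $\theta^\ep(Y_{jk})\ne0$ forces $|Y_{jk}|\le C_0\ep$ and $\psi$ is Lipschitz, replacing $\phi(t,x_j)\psi(t,x_k)$ by $f(t,x_j)$ on the left-hand side costs at most $C\ep\int_0^T\frac1{N^2}\sum_{j,k}\theta^\ep(Y_{jk})\,dt$, of expectation $O(\ep)$ by Lemma~\ref{dm-theta}. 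On the other side, expanding $(\mu^N_t*\eta^\delta)^2$, integrating against $\phi\psi$, and using that $w,x_k$ lie within $O(\delta)$ of $x_j$ on the support of $\eta^\delta(w-x_j)\eta^\delta(w-x_k)$,
\[
\int_0^T\!\!\int_{\R^d}\phi\psi\,(\mu^N_t*\eta^\delta)^2\,dw\,dt=\int_0^T\frac1{N^2}\sum_{j,k\le N(t)}f(t,x_j)\,\zeta^\delta(Y_{jk})\,dt+O(\delta)\!\int_0^T\!\|\mu^N_t*\eta^\delta\|_{L^2}^2\,dt,
\]
where $\zeta^\delta(z):=\int\eta^\delta(w)\eta^\delta(w-z)\,dw$ is an even mollifier of scale $\asymp\delta$ with $\int\zeta^\delta=1$ and $\frac1{N^2}\sum_{j,k}\zeta^\delta(Y_{jk})=\|\mu^N_t*\eta^\delta\|_{L^2}^2$. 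Hence it suffices to prove: (i) the uniform bound $\limsup_{\delta\to0}\limsup_{N\to\infty}\E\int_0^T\|\mu^N_t*\eta^\delta\|_{L^2}^2\,dt<\infty$; and (ii) $\limsup_{\delta\to0}\limsup_{N\to\infty}\E\int_0^T\frac1{N^2}\sum_{j,k\le N(t)}[\theta^\ep-\zeta^\delta](Y_{jk}(t))\,f(t,x_j(t))\,dt=0$.

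For (ii) I would run the It\^o--Tanaka trick with source $V^{\ep,\delta}:=\theta^\ep-\zeta^\delta$, which has $\int V^{\ep,\delta}=0$: let $R=R^{\ep,\delta}(t,x)$ solve $\partial_tR+\Delta R+V^{\ep,\delta}=0$, $R(T,\cdot)=0$ (the full Laplacian being the diffusive generator of $Y_{jk}$). Applying It\^o's formula to $t\mapsto\frac1{N^2}\sum_{j,k\le N(t)}R(t,Y_{jk}(t))f(t,x_j(t))$, the diffusive part produces $(\partial_tR+\Delta R)(Y_{jk})f=-V^{\ep,\delta}(Y_{jk})f$ together with cross terms $\nabla R(Y_{jk})\!\cdot\!\nabla f(x_j)$ and $R(Y_{jk})(\partial_t+\tfrac12\Delta)f(x_j)$, while the proliferation/death part of $\wt\cL_N$ adds, when a particle $l$ is born or dies, a change of the double sum of size $O\big(\tfrac1{N^2}\sum_k|R(Y_{lk})|\,\|f\|_\infty\big)$ weighted by $1$ (birth) or $\tfrac1N\sum_m\theta^\ep(Y_{lm})$ (death). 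Using $R(T,\cdot)=0$ and rearranging,
\[
\int_0^T\frac1{N^2}\sum_{j,k}V^{\ep,\delta}(Y_{jk})f\,dt=-\frac1{N^2}\sum_{j,k\le N_0}R(0,Y_{jk}(0))f(0,x_j(0))+\mathrm{(cross)}+\mathrm{(jump)}+\mathrm{(mart)}.
\]
The martingale vanishes in $L^2(\P)$ by the quadratic-variation estimate already used for $\wt M$ in \eqref{main-ito}, now with Lemma~\ref{dm-theta} and the bounds on $R^{\ep,\delta},\nabla R^{\ep,\delta}$ below. Those bounds come from repeating the computation of Proposition~\ref{diff-est}: since $R^{\ep,\delta}(t,\cdot)=K(T-t,\cdot)*\theta^\ep-K(T-t,\cdot)*\zeta^\delta$ with $K$ as in \eqref{green}, one gets $|R^{\ep,\delta}(t,x)|\le Ce^{-c|x|^2}1_{\{|x|\ge1\}}+C(|x|\vee\ep)^{2-d}1_{\{|x|<1\}}$ and $|\nabla R^{\ep,\delta}(t,x)|\le Ce^{-c|x|^2}1_{\{|x|\ge1\}}+C(|x|\vee\ep)^{1-d}1_{\{|x|<1\}}$ (with $|\log(|x|\vee\ep)|$ for $d=2$), uniformly in $t,\ep,\delta$; as $|x|^{2-d},|x|^{1-d}$ are locally integrable on $\R^d$, both $R^{\ep,\delta}(t,\cdot)$ and $\nabla R^{\ep,\delta}(t,\cdot)$ are dominated by a fixed $L^1(\R^d)$ function uniformly in $t,\ep,\delta$; and since $K(T-t,\cdot)*\theta^\ep$ and $K(T-t,\cdot)*\zeta^\delta$ both converge to $K(T-t,\cdot)$ in $L^1$, $\sup_t\|R^{\ep,\delta}(t,\cdot)\|_{L^1}\to0$ and $\sup_t\|\nabla R^{\ep,\delta}(t,\cdot)\|_{L^1}\to0$ in the iterated limit. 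The boundary term then has expectation $\le C\|f\|_\infty(\|u_0\|_{L^\infty}\|u_0\|_{L^1}\|R^{\ep,\delta}(0,\cdot)\|_{L^1}+N^{-1}\|R^{\ep,\delta}(0,\cdot)\|_{L^\infty})$ by the i.i.d.\ initial structure \eqref{initial}, which $\to0$ since $N^{-1}\|R^{\ep,\delta}(0,\cdot)\|_{L^\infty}\le CN^{-1}(\ep^{2-d}\vee\delta^{2-d}\vee|\log\ep|\vee1)\to0$ using $\ep^{-d}\le CN$.

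What remains --- the cross terms, the jump terms, the error $O(\delta)\int_0^T\|\mu^N_t*\eta^\delta\|_{L^2}^2dt$ and item (i) --- all hinge on one a priori estimate, which I expect to be the main obstacle: a bound ruling out microscopic clumping,
\[
\sup_N\ \E\int_0^T\big\langle\mu^N_t*\kappa,\ \mu^N_t\big\rangle\,dt\ \le\ C\|\kappa\|_{L^1}\qquad\text{uniformly over mollifiers }\kappa\text{ of arbitrary scale,}
\]
together with its trilinear strengthening $\sup_N\E\int_0^T\langle\mu^N_t,(\mu^N_t*\kappa)^2\rangle\,dt<\infty$ needed to absorb the triple sum $\tfrac1{N^3}\sum_l(\sum_m\theta^\ep(Y_{lm}))(\sum_k|R(Y_{lk})|)$ from the death jumps. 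This is exactly the ``controlling concentration of particles'' difficulty highlighted in the introduction; note that Lemma~\ref{dm-theta} supplies it only for the mollifier $\theta^\ep$ built into the dynamics. I would establish it by a Gr\"onwall argument on the two- and three-point correlation functions of the system (a truncated BBGKY-type hierarchy, in which the quadratic death term furnishes the damping and $u_0\in L^\infty$ the base case), or by dominating the pair/triple counts by those of a pure-birth branching system from the same configuration. Granting this, item (i) follows, the cross terms are $\le C\int_0^T(\|\nabla R^{\ep,\delta}(t,\cdot)\|_{L^1}+\|R^{\ep,\delta}(t,\cdot)\|_{L^1})\,dt\to0$ by dominated convergence, and for the jump terms one splits $R^{\ep,\delta}=R^{\ep,\delta}1_{\{|x|<a\}}+R^{\ep,\delta}1_{\{|x|\ge a\}}$, handles the near-diagonal piece by the concentration estimates and the far piece by $\sup_{|x|\ge a}|R^{\ep,\delta}(t,x)|\to0$ (iterated limit, fixed $a$) with Lemma~\ref{dm-theta}, and lets $a\to0$ last. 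Assembling the pieces yields the asserted identity with $\limsup_{\delta\to0}\limsup_{N\to\infty}\E|Err(\ep,N,\delta)|=0$, for every admissible $\eta$.
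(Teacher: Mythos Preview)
Your approach is essentially correct and is a legitimate variant of the paper's. The core machinery is the same --- an It\^o--Tanaka trick on a double sum, using the parabolic potential of a mollifier, and the main obstacle is indeed the pair/triple concentration estimate, for which you correctly propose the pure-birth coupling. The paper carries out exactly that coupling (its Proposition~\ref{progenitor}): one dominates the system by unit-rate branching Brownian motion, indexes particles by their genealogical tree, and splits sums according to whether the progenitors coincide; the ``same progenitor'' piece contributes $N^{-1}\|\kappa\|_\infty$ and the ``distinct progenitors'' piece contributes $\int\kappa(x)e^{-c|x|}\,dx$. Your stated bound $\sup_N\E\int_0^T\langle\mu^N_t*\kappa,\mu^N_t\rangle\,dt\le C\|\kappa\|_{L^1}$ is therefore slightly too optimistic as written --- there is always the diagonal-type term $C\|\kappa\|_\infty/N$ --- but in every application you make of it ($\kappa=|R^{\ep,\delta}|$, $|\nabla R^{\ep,\delta}|$, $\zeta^\delta$, $\theta^\ep$) this extra piece is $o(1)$ under $\ep^{-d}\le CN$, so your argument survives unchanged.

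The genuine methodological difference is in how the It\^o--Tanaka step is organised. You solve $\partial_tR+\Delta R+(\theta^\ep-\zeta^\delta)=0$ once and directly compare the microscopic kernel to the mesoscopic one; the paper instead solves $\partial_tr^\ep+\Delta r^\ep+\theta^\ep=0$ and applies It\^o to the \emph{difference} $X^N_z-X^N_0$ with $X^N_z:=\tfrac1{N^2}\sum_{j,k}r^\ep(t,Y_{jk}+z)\phi(t,x_j)\psi(t,x_k)$, thereby proving first that the $\theta^\ep$-sum is asymptotically invariant under a shift $z$ of the argument, and only afterwards averaging $z$ against $\eta^\delta(z_1)\eta^\delta(z_2)\,dz_1dz_2$ to manufacture the convolution square. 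Your route is more direct and avoids the separate shift step; the paper's route has the small advantage that all error terms involve a difference $r^\ep(\cdot+z)-r^\ep(\cdot)$, so the smallness comes from continuity of translation in $L^1$ rather than from $\|K*\theta^\ep-K*\zeta^\delta\|_{L^1}\to0$, and the paper never needs to name $\zeta^\delta$. In either version, the concentration input is identical: the paper's Corollary~\ref{cor:double} plays the role of your ``cross'' and ``jump'' estimates, and its Remark~\ref{rmk:mg} is exactly your martingale step. Your Gr\"onwall-on-correlation-functions alternative is not pursued in the paper and would be harder, since the death term does not straightforwardly damp the two-point function; the pure-birth domination you also mention is the clean route.
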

{\textbf{Step I}}. Fixing $\ep, T$. Consider the time-dependent functional on $\eta$, indexed by $z\in\R^d$: 
\[
X^N_z(t, \eta):=\frac{1}{N^2}\sum_{j,k\le i_{\text{max}}(\eta)}r^\epsilon\left(t, x_j-x_k+z\right)\phi(t,x_j)\psi(t, x_k)
\]
where $r^\ep(t,x)$ is the auxiliary function defined in \eqref{kolm}. By It\^o formula applied to the process $(X^N_z-X^N_0)(t,\eta(t))$, we get that 
\begin{align*}
(X^N_z-X^N_0)(T, \eta(T))-(X^N_z-X^N_0)(0, \eta(0)) = \int_0^T \left((\partial_t+\wt\cL_N)(X_z-X_0)\right)(t, \eta(t))\, dt + M_T
\end{align*}
where $\{M_t\}$ is a martingale. Written out in detail, the \abbr{LHS} has one term 
\begin{align*}
H_0:=-\frac{1}{N^2}\sum_{j,k\le N_0}\big[r^\ep(0, x_j(0)-x_k(0)+z)-r^\ep(0, x_j(0)-x_k(0))\big]\phi(0,x_j(0))\psi(0,x_k(0))
\end{align*} 
and we have the following terms in the integrand of \abbr{RHS}
\begin{align*}
H_t(t)&:=\frac{1}{N^2}\sum_{j,k\le N(t)}\big[r^\epsilon(t,x_j(t)-x_k(t)+z)-r^\epsilon(t,x_j(t)-x_k(t))\big]\partial_t\Big(\phi(t,x_j(t))\psi(t, x_k(t))\Big).
\end{align*}
\begin{align*}
H_{xx}(t)&:=\frac{1}{N^2}\sum_{j,k\le N(t)}\Big[(\partial_t+\Delta) \big(r^\epsilon(t, x_j(t)-x_k(t)+z)-r^\epsilon(t, x_j(t)-x_k(t)\big)\Big]\phi(t,x_j(t))\psi(t, x_k(t))\\
&=\frac{1}{N^2}\sum_{j,k\le N(t)}\big[\theta^\epsilon(x_j(t)-x_k(t))-\theta^\epsilon(x_j(t)-x_k(t)+z)\big]\phi(t,x_j(t))\psi(t, x_k(t)).
\end{align*}
\begin{align*}
H_J(t)&:=\frac{1}{N^2}\sum_{j,k\le N(t)}\big[r^\epsilon(t,x_j(t)-x_k(t)+z)-r^\epsilon(t,x_j(t)-x_k(t))\big]\\
&\quad\quad\quad \cdot \frac{1}{2}\big(\Delta \phi(t,x_j(t))\psi(t, x_k(t))+\Delta\psi(t, x_k(t))\phi(t,x_j(t))\big).
\end{align*}
\begin{align*}
H_{x}(t)&:=\frac{1}{N^2}\sum_{j,k\le N(t)}\big[\nabla r^\ep(t,x_j(t)-x_k(t)+z)-\nabla r^\ep(t,x_j(t)-x_k(t))\big]\\
&\quad\quad\quad \cdot \frac{1}{2}\big(\nabla \phi(t,x_j(t))\psi(t,x_k(t))-\nabla \psi(t,x_k(t))\phi(t,x_j(t))\big).
\end{align*}
\begin{align*}
H_{C}(t):=&\frac{1}{N^2}\sum_{j,k\le N(t)}\Big[1-\frac{1}{N}\sum_{i\le N(t)}\theta^\ep\left(x_j(t)-x_i(t)\right)\Big]\\
&\quad\quad\quad \cdot\left[r^\epsilon\left(t,x_j(t)-x_k(t)+z\right)-r^\epsilon\left(t,x_j(t)-x_k(t)\right)\right]\phi(t,x_j(t))\psi(t, x_k(t))\\
&+\frac{1}{N^2}\sum_{j,k\le N(t)}\Big[1-\frac{1}{N}\sum_{i\le N(t)}\theta^\ep\left(x_k(t)-x_i(t)\right)\Big]\\
&\quad\quad\quad \cdot\left[r^\epsilon\left(t,x_j(t)-x_k(t)+z\right)-r^\epsilon\left(t,x_j(t)-x_k(t)\right)\right]\phi(t,x_j(t))\psi(t, x_k(t)).
\end{align*}
The martingale terms can be controlled via its quadratic variation
\begin{align*}
\E[M^2_T]\le 4\int_0^T\E[B^{(1)}_t+B^{(2)}_t]dt
\end{align*}
where 
\begin{align}\label{mg}
B^{(1)}_t:=&\frac{1}{N^4}\sum_{j\le N(t)}\left|\nabla_{x_j}\Big(\sum_{k\le N(t)}\big[r^\ep(t,x_j(t)-x_k(t)+z)- r^\ep(t,x_j(t)-x_k(t))\big]\phi(t,x_j(t))\psi(t,x_k(t))\Big)\right|^2 \nonumber\\
&+\frac{1}{N^4}\sum_{k\le N(t)}\left|\nabla_{x_k}\Big(\sum_{j\le N(t)}\big[r^\ep(t,x_j(t)-x_k(t)+z)- r^\ep(t,x_j(t)-x_k(t))\big]\phi(t,x_j(t))\psi(t,x_k(t))\Big)\right|^2.
\end{align}
\begin{align}\label{mg2}
B^{(2)}_t:=&\frac{1}{N^4}\sum_{j\le N(t)}\Big[1+\frac{1}{N}\sum_{i\le N(t)}\theta^\ep((x_j(t)-x_i(t))\Big]   \nonumber\\
&\quad\quad\quad \cdot\left|\sum_{k\le N(t)}\big[r^\ep(t,x_j(t)-x_k(t)+z)-r^\ep(t,x_j(t)-x_k(t))\big]\phi(t,x_j(t))\psi(t,x_k(t))\right|^2 \nonumber\\
&+\frac{1}{N^4}\sum_{k\le N(t)}\Big[1+\frac{1}{N}\sum_{i\le N(t)}\theta^\ep((x_k(t)-x_i(t))\Big]    \nonumber\\
&\quad\quad\quad \cdot\left|\sum_{j\le N(t)}\big[r^\ep(t,x_j(t)-x_k(t)+z)-r^\ep(t,x_j(t)-x_k(t))\big]\phi(t,x_j(t))\psi(t,x_k(t))\right|^2.
\end{align}

{\textbf{Step II}}. 
We show that among the previous terms, only $H_{xx}$ is significant, in a sense to be made precise. To this end, we need to bound the various other terms, of which there are significant similarities: one type of terms is a double sum involving the difference of $r^\ep$; the second type is a double sum involving the difference of $\nabla r^\ep$; and the third type is a triple sum involving the difference of $r^\ep$. 

We first prove a general proposition about a pure proliferation system that is naturally coupled to our system, from which some of our desired conclusions immediately follow.
\begin{proposition}\label{progenitor}
Let $d\ge 1$ and $\left(  x_{i}\left(  t\right)  \right)  $ be the pure proliferation model
with unit rate (no killing), with $N_0$ initial particles distributed
independently with density $(\int  u_0)^{-1}u_{0}$, for $u_{0}$ satisfying Condition \ref{kpp-init}. Let $T>0$ be
given and let $f(t,x): [0,T]\times\mathbb{R}^{d}%
\rightarrow\mathbb{R}_+$, $g(x):\R^d\to\R_+$ be bounded non-negative functions, with $g\in L^1(\R^d)$. Then there are constants $C_{T},c=c(d,T)>0$, independent of $N$ and $f, g$, such
that for any $t\in[0,T]$%
\begin{align}\label{general-double}
\mathbb{E}\left[  \sum_{i,j}f\left( t, x_{i}\left(  t\right)  -x_{j}\left(
t\right)  \right)  \right]  \leq N_02C^2_{T}\left\Vert f\right\Vert _{\infty
}\mathbb{+}N^{2}C_{T}^{2}\gamma^{2}e^{cR}\int_{\R^d}
f\left( t, x\right)  e^{-c\left\vert x\right\vert }dx,
\end{align}
and
\begin{align}
 &\mathbb{E}\left[ \sum_{i,j,k}f\left(t,  x_{i}\left(
t\right)  -x_{j}\left(  t\right)  \right)  g\left(x_{j}\left(  t\right)
-x_{k}\left(  t\right)  \right) \right]  \nonumber\\
 \leq& N_05C_T^3\left\Vert f\right\Vert _{\infty}\left\Vert g\right\Vert _{\infty}   \nonumber\\
& +N^{2}2C_T^3\gamma^{2}e^{cR}\left(  \left\Vert
f\right\Vert _{\infty}\int_{\R^d} g\left(  x\right)  e^{-c\left\vert x\right\vert
}dx+\left\Vert g\right\Vert _{\infty}\int_{\R^d} f\left(t,  x\right)  e^{-c\left\vert
x\right\vert }dx\right)  \text{ }  \nonumber\\
& +N^{3}C_T^3\gamma^{3}e^{cR}\left\Vert
g\right\Vert _{L^{1}}\int_{\R^d} f\left(t,  x\right)  e^{-c\left\vert x\right\vert }dx, \label{general-triple}
\end{align}
where the sum is extended to all particles alive at time $t$. The constant
$C_{T} (=e^T)$ is the average number of alive particles at time $T$, when starting from a single initial particle.
\end{proposition}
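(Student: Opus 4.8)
The plan is to compute the moments directly from the branching structure, using two structural facts: progenies of distinct initial particles evolve independently, and within a single progeny the spatial motion is independent of the genealogical tree. Write $Z_t$ for the number of time-$t$ descendants of a single initial particle; since this is a Yule process of rate $1$, solving the ODEs for its factorial moments gives $\E Z_t=e^t$, $\E Z_t^2=2e^{2t}-e^t$, $\E Z_t^3=6e^{3t}-6e^{2t}+e^t$, so all three are $\le 6C_T^3$ for $t\le T$ (with $C_T:=e^T$). By the many-to-one formula, the expected empirical measure of the time-$t$ progeny of an ancestor at $\xi$ is $e^t\,p_t(\cdot-\xi)\,dx$, where $p_t$ denotes the transition density of a single particle (a standard Brownian motion, generator $\tfrac12\Delta$); and since, conditionally on the genealogical tree, each leaf has marginal law $p_t(\cdot-\xi)$, one also gets $\E\big[\,Z_t\sum_i\delta_{x_i(t)}\,\big]=\E[Z_t^2]\,p_t(\cdot-\xi)\,dx$.

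The analytic input is a Gaussian-tail estimate uniform in $t\in(0,T]$: writing $\rho=u_0/\|u_0\|_{L^1}$, which is bounded by $\gamma/\|u_0\|_{L^1}$ and supported in $\B(0,R)$, one has $\sup_{0<t\le T}(\rho*p_t)(x)\le \tfrac{\gamma}{\|u_0\|_{L^1}}\,C(d,T)\,e^{cR}\,e^{-c|x|}$. Indeed, for $|x|\le 2R$ one bounds $\rho*p_t\le\|\rho\|_\infty$; for $|x|>2R$ one has $|x-\xi|\ge |x|/2$ on the support of $\rho$, hence $p_t(x-\xi)\le (2\pi t)^{-d/2}e^{-|x|^2/(8t)}$, and $\sup_{0<t\le T}t^{-d/2}e^{-|x|^2/(8t)}$ is, for $|x|$ large, attained at $t=T$ and so $\le C(d,T)e^{-c|x|}$. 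The same estimate applies to the convolution of $p_t$ with $\rho(\cdot)$ and its reflection $\rho(-\cdot)$, which is bounded and supported in $\B(0,2R)$. I will also use the elementary bound $\int e^{-c|y|}e^{-c|y-w|}\,dy\le C(d)c^{-d}e^{-c|w|/2}$.

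With these in hand, the proof decomposes the index sum according to which indices share an initial ancestor. For the double sum, either $i,j$ descend from the same ancestor $a$ — there are $N_0$ choices, each contributing at most $\|f\|_\infty\E Z_t^2\le 2C_T^2\|f\|_\infty$, giving the $N_0$ term — or from distinct ancestors $a\neq b$, in which case independence plus many-to-one gives $e^{2t}\iint f(t,x-y)(\rho*p_t)(x)(\rho*p_t)(y)\,dx\,dy$ per pair, which after applying the tail estimate to both factors, summing over $\le N_0^2$ pairs, and using $N_0^2(\gamma/\|u_0\|_{L^1})^2=N^2\gamma^2$ yields the $N^2$ term. For the triple sum, the partition of $\{i,j,k\}$ splits into: all from one ancestor (the $N_0$ term, via $\E Z_t^3$); exactly two sharing an ancestor, in three sub-patterns (each an $N^2$ term); and all three distinct (the $N^3$ term). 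In each two-ancestor sub-pattern one bounds by its sup-norm the function among $f,g$ whose two arguments lie in the same progeny, collapses that progeny's contribution via $\E Z_t^2\le 2C_T^2$ together with the conditional leaf-law $p_t(\cdot-\xi)$, and retains the Gaussian decay coming from the other function, whose arguments straddle two ancestors; in the all-distinct case one additionally bounds $(\rho*p_t)(z)\le\|\rho\|_\infty$ to extract the $\|g\|_{L^1}$ factor. Contributions in which some indices coincide are absorbed into the pair/singleton estimates already obtained.

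The step I expect to require the most care is the bookkeeping for the triple sum: matching each ancestry pattern to the correct term on the right-hand side, verifying that every ``ancestor'' really contributes a factor $\gamma/\|u_0\|_{L^1}$ so that the powers of $\|u_0\|_{L^1}$ cancel against $N_0^{k}=(N\|u_0\|_{L^1})^{k}$ and leave exactly $N^{k}\gamma^{k}$, and absorbing residual geometric constants such as $|\B(0,R)|$ into the $e^{cR}$ factor. The only genuinely analytic ingredient is the uniform-in-$t$ Gaussian-tail bound above; everything else is careful but routine combinatorics together with the convolution estimates listed.
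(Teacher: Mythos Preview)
Your approach is essentially the same as the paper's: both decompose the sums according to which indices share an initial ancestor, exploit the independence of distinct progenies together with the fact that, conditionally on the genealogy, each leaf has the Brownian marginal $p_t(\cdot-\xi)$, and then feed in a Gaussian-tail bound. The paper formalises the genealogy via an explicit multi-index labelling $a=(a_1,\dots,a_n)$ and constructs the ``extended'' trajectory $\widetilde x_t^a$ to make the conditional-Brownian statement precise, while you invoke the many-to-one formula and the Yule-process moments directly; these are equivalent packagings of the same idea.

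Two small points. First, your description of the two-ancestor sub-patterns (``bound by sup-norm the function whose two arguments lie in the same progeny'') does not literally cover the case where the two indices sharing an ancestor are not the two arguments of either $f$ or $g$ (e.g.\ $i,k$ same ancestor in $f(x_i-x_j)g(x_j-x_k)$); but the same mechanism---bound one function by its sup-norm, collapse the shared progeny via $\E Z_t^2$, keep the straddling integral---still applies, so this is only a wording issue. Second, in your Gaussian-tail estimate the bound $C(d,T)e^{cR}e^{-c|x|}$ needs a slightly larger prefactor (say $e^{2cR}$) to also dominate $\|\rho\|_\infty$ on the region $|x|\le 2R$; the paper sidesteps this by bounding $\mathbb P(|\sqrt{2}W_t|\ge |x|-R)$ directly. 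Neither point affects correctness.
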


\begin{proof}
\textbf{Step 1}. Essential for the proof is the fact that the exponential
clocks of proliferation can be modeled a priori, therefore let us write a few
details in this direction for completeness. Particles, previously indexed by
$i$, will be indexed below by a multi-index $a$ of the form
\[
a=\left(  a_{1},...,a_{n}\right)
\]
with $n$ positive integer, $a_{1}\in\left\{  1,...,N_0\right\}  $,
$a_{2},...,a_{n}\in\left\{  1,2\right\}  $ (if $n\geq2$). Denote by
$\Lambda^{N}$ the set of all such multi-indexes. Given $a\in\Lambda^{N}$, we
denote by $n\left(  a\right)  $ the length of the string $a=\left(
a_{1},...,a_{n}\right)  $ defining $a$. We set
\[
a^{-1}=\left(  a_{1},...,a_{n-1}\right)
\]
when $n\geq2$. The heuristic idea behind these notations is that $a_{1}$
denotes the progenitor at time $t=0$; $a_{2},...,a_{n}$ describe the
subsequent story, where particle $a$ is a direct descendant of particle
$a^{-1}$. 

Each particle $a$ lives for a finite random time. On a probability space
$\left(  \Omega,\mathcal{F},\mathbb{P}\right)  $, assume to have a countable
family of independent Exponential r.v.'s $\tau^{a}$ of parameter $\lambda=1$,
indexed by $a\in\Lambda^{N}$. The time $\tau^{a}$ is the life span of particle
$a$; its interval of existence will be denoted by $[T_{0}^{a},T_{f}^{a})$ with
$T_{f}^{a}=T_{0}^{a}+\tau^{a}$. The random times $T_{0}^{a}$ are defined
recursively in $n\in\mathbb{N}$: if $n\left(  a\right)  =0$, $T_{0}^{a}=0$; if
$n\left(  a\right)  >0$,
\[
T_{0}^{a}=T_{0}^{a^{-1}}+\tau^{a^{-1}}=T_{f}^{a^{-1}}.
\]
We may now define the set of particles alive at time $t$: it is the set%
\[
\Lambda_{t}^{N}=\left\{  a\in\Lambda^{N}:t\in\lbrack T_{0}^{a},T_{f}%
^{a})\right\}  .
\]

Initial particles have a random initial position in the space $\mathbb{R}^{d}%
$: we assume that on the probability space $\left(  \Omega,\mathcal{F}%
,\mathbb{P}\right)  $ there are r.v.'s $X_{1},...,X_{N_0}$ distributed with
density $(\int u_{0})^{-1}u_0$ independent among themselves and with respect to the random
times $\tau^{a}$, $a\in\Lambda^{N}$.

Particles move as Brownian motions: we assume that on the probability space
$\left(  \Omega,\mathcal{F},\mathbb{P}\right)  $ there is a countable family
of independent Brownian motions $W^{a}$, $a\in\Lambda^{N}$, independent among
themselves and with respect to the random times $\tau^{a}$, $a\in\Lambda^{N}$
and the initial positions $X_{0}^{1},...,X_{0}^{N_0}$. The position $x_{t}^{a}$
of particle $a$ during its existence interval $[T_{0}^{a},T_{f}^{a})$ is
defined recursively in $n\in\mathbb{N}$ as follows: if $n\left(  a\right)
=0$, $x_{t}^{a}=X_{0}^{a}+W_{t}^{a}$ for $t\in\lbrack T_{0}^{a},T_{f}^{a})$;
if $n\left(  a\right)  >0$%
\[
x_{t}^{a}=x_{T_{0}^{a}}^{a^{-1}}+W_{t-T_{0}^{a}}^{a}\qquad\text{for }%
t\in\lbrack T_{0}^{a},T_{f}^{a}).
\]

\textbf{Step 2}. Given $k\in\left\{  1,...,N\right\}  $ and $a=\left(
a_{1},...,a_{n}\right)  \in\Lambda^{N}$, the process $x_{t}^{a}$ is formally
defined only for $t\in\lbrack T_{0}^{a},T_{f}^{a})$. Call $\widetilde{x}%
_{t}^{a}$ the related process, defined for all $t\geq0$ as follows: for each
$b=\left(  a_{1},...,a_{m}\right)  $ with $m\leq n$, on the interval
$[T_{0}^{b},T_{f}^{b})$ it is given by $x_{t}^{b}$; and on $[T_{f}^{a}%
,\infty)$ it is given by $x_{T_{0}^{a}}^{a^{-1}}+W_{t-T_{0}^{a}}^{a}$. The
process $\widetilde{x}_{t}^{a}$ is a Brownian motion with initial position
$X_{0}^{a_{1}}$. More precisely, if $\mathcal{G}$ denotes the $\sigma$-algebra
generated by the family $\left\{  \tau^{a};a\in\Lambda^{N}\right\}  $, then
the law of $\widetilde{x}_{t}^{a}$ conditioned to $\mathcal{G}$ is the law of
a Brownian motion with initial position $X_{0}^{a_{1}}$.

\textbf{Step 3}. With the notations of Step 1 above, we have to handle%
\[
\mathbb{E}\left[  \sum_{a,b\in\Lambda_{t}^{N}}f\left( t, x_{t}^{a}-x_{t}%
^{b}\right)  \right]  =\sum_{a,b\in\Lambda^{N}}\mathbb{E}\left[
1_{a\in\Lambda_{t}^{N}}1_{b\in\Lambda_{t}^{N}}f\left(t,  x_{t}^{a}-x_{t}%
^{b}\right)  \right]  .
\]
As explained in the previous step, let us denote the components of $a,b$ as
$a=\left(  a_{1},...a_{n}\right)  $, $b=\left(  b_{1},...b_{m}\right)  $, with
integers $n,m>0$, $a_{1},b_{1}\in\left\{  1,...,N_0\right\}  $ and all the other
entries in $\left\{  1,2\right\}  $. Then%
\begin{align*}
&  \sum_{a,b\in\Lambda^{N}}\mathbb{E}\left[  1_{a\in\Lambda_{t}^{N}}%
1_{b\in\Lambda_{t}^{N}}f\left(t,  x_{t}^{a}-x_{t}^{b}\right)  \right]  \\
&  =\sum_{a,b\in\Lambda^{N}:a_{1}=b_{1}}\mathbb{E}\left[  1_{a\in\Lambda
_{t}^{N}}1_{b\in\Lambda_{t}^{N}}f\left(t,  x_{t}^{a}-x_{t}^{b}\right)  \right]
+\sum_{a,b\in\Lambda^{N}:a_{1}\neq b_{1}}\mathbb{E}\left[  1_{a\in\Lambda
_{t}^{N}}1_{b\in\Lambda_{t}^{N}}f\left(t,  x_{t}^{a}-x_{t}^{b}\right)  \right]
.
\end{align*}
In the following computation, when we decompose a multi-index $a=\left(
a_{1},...a_{n}\right)  $ in the form $\left(  a_{1},a^{\prime}\right)  $ we
understand that $a^{\prime}$ does not exist in the case $n=1$, while
$a^{\prime}=\left(  a_{2},...a_{n}\right)  $ if $n\geq2$. We simply bound%
\begin{align*}
&  \sum_{a,b\in\Lambda^{N}:a_{1}=b_{1}}\mathbb{E}\left[  1_{a\in\Lambda
_{t}^{N}}1_{b\in\Lambda_{t}^{N}}f\left( t, x_{t}^{a}-x_{t}^{b}\right)  \right]
\\
&  =\sum_{a_{1}=1}^{N_0}\sum_{a^{\prime},b^{\prime}\in I}\mathbb{E}\left[
1_{\left(  a_{1},a^{\prime}\right)  \in\Lambda_{t}^{N}}1_{\left(
a_{1},b^{\prime}\right)  \in\Lambda_{t}^{N}}f\left( t, x_{t}^{\left(
a_{1},a^{\prime}\right)  }-x_{t}^{\left(  a_{1},b^{\prime}\right)  }\right)
\right]  \\
&  \leq\left\Vert f\right\Vert _{\infty}\sum_{a_{1}=1}^{N_0}\sum_{a^{\prime
},b^{\prime}\in I}\mathbb{E}\left[  1_{\left(  a_{1},a^{\prime}\right)
\in\Lambda_{t}^{N}}1_{\left(  a_{1},b^{\prime}\right)  \in\Lambda_{t}^{N}%
}\right]  \\
&  =N_0\left\Vert f\right\Vert _{\infty}\sum_{a^{\prime},b^{\prime}\in
I}\mathbb{E}\left[  1_{\left(  1,a^{\prime}\right)  \in\Lambda_{t}^{N}%
}1_{\left(  1,b^{\prime}\right)  \in\Lambda_{t}^{N}}\right]
\end{align*}
where $I$ denotes the set of binary sequences of finite length, and the last identity is due to the fact that the quantity $\mathbb{E}%
\left[  1_{\left(  a_{1},a^{\prime}\right)  \in\Lambda_{t}^{N}}1_{\left(
a_{1},b^{\prime}\right)  \in\Lambda_{t}^{N}}\right]  $ is independent of
$a_{1}$: then it is equal to%
\[
=N_0\left\Vert f\right\Vert _{\infty}\sum_{a^{\prime},b^{\prime}\in I}%
\mathbb{E}\left[  1_{\left(  1,a^{\prime}\right)  \in\Lambda_{t}^{1}%
}1_{\left(  1,b^{\prime}\right)  \in\Lambda_{t}^{1}}\right]
\]
where $\Lambda_{t}^{1}$ is the set of indexes relative to the case of a single
initial particle, and the identity holds because the presence of more initial
particles does not affect the expected values of the previous expression;
finally the previous quantity is equal to%
\begin{align*}
&  =N_0\left\Vert f\right\Vert _{\infty}\mathbb{E}\left[  \sum_{a,b\in
\Lambda_{t}^{1}}1\right]  \le N_0\left\Vert f\right\Vert _{\infty}\mathbb{E}\left[
\left\vert \Lambda_{t}^{1}\right\vert^2 \right]  \\
&  \leq N_0\left\Vert f\right\Vert _{\infty}\mathbb{E}\left[  \left\vert
\Lambda_{T}^{1}\right\vert^2 \right]\le N_0\|f\|_\infty(C^2_T+C_T),
\end{align*}
where we have denoted by $\left\vert \Lambda_{t}^{1}\right\vert $ the
cardinality of the set $\Lambda_{t}^{1}$, which is a Poisson random variable with finite mean $C_{T}=\mathbb{E}\left[
\left\vert \Lambda_{T}^{1}\right\vert \right]$, and we get
one addend of the inequality stated in the proposition.

Concerning the other sum,
\begin{align*}
&  \sum_{a,b\in\Lambda^{N}:a_{1}\neq b_{1}}\mathbb{E}\left[  1_{a\in
\Lambda_{t}^{N}}1_{b\in\Lambda_{t}^{N}}f\left(t,  x_{t}^{a}-x_{t}^{b}\right)
\right]  \\
&  =\sum_{a_{1}\neq b_{1}}\sum_{a^{\prime},b^{\prime}\in I}\mathbb{E}\left[
1_{\left(  a_{1},a^{\prime}\right)  \in\Lambda_{t}^{N}}1_{\left(
b_{1},b^{\prime}\right)  \in\Lambda_{t}^{N}}f\left( t, x_{t}^{\left(
a_{1},a^{\prime}\right)  }-x_{t}^{\left(  b_{1},b^{\prime}\right)  }\right)
\right]  \\
&  \leq N_0^{2}\sum_{a^{\prime},b^{\prime}\in I}\mathbb{E}\left[  1_{\left(
1,a^{\prime}\right)  \in\Lambda_{t}^{2}}1_{\left(  2,b^{\prime}\right)
\in\Lambda_{t}^{2}}f\left(t,  x_{t}^{\left(  1,a^{\prime}\right)  }%
-x_{t}^{\left(  2,b^{\prime}\right)  }\right)  \right]
\end{align*}
where the last inequality, involving a system with only two initial particles,
can be explained similarly to what done above. Recalling the notation of Step
2 above, the previous expression is equal to%
\[
=N_0^{2}\sum_{a^{\prime},b^{\prime}\in I}\mathbb{E}\left[  1_{\left(
1,a^{\prime}\right)  \in\Lambda_{t}^{2}}1_{\left(  2,b^{\prime}\right)
\in\Lambda_{t}^{2}}f\left(t,  \widetilde{x}_{t}^{\left(  1,a^{\prime}\right)
}-\widetilde{x}_{t}^{\left(  2,b^{\prime}\right)  }\right)  \right]  .
\]
Now we use the fact that the laws of processes indexed by 1 and 2 are
independent and the law of $\widetilde{x}_{t}^{\left(  1,a^{\prime}\right)  }$
conditioned to $\mathcal{G}^{1}$ is a Brownian motion with initial position
$X_{0}^{1}$, where $\mathcal{G}^{1}$ is the $\sigma$-algebra generated by the
family $\left\{  \tau^{\left(  1,a^{\prime}\right)  };\left(  1,a^{\prime
}\right)  \in\Lambda^{1}\right\}  $; and similarly for $\widetilde{x}%
_{t}^{\left(  2,b^{\prime}\right)  }$ with respect to $\mathcal{G}^{2}$,
similarly defined. Thus, after taking conditional expectation with respect to
$\mathcal{G}^{1}\vee\mathcal{G}^{2}$ inside the previous expected value, we
get that the previous expression is equal to
\[
=N_0^{2}\sum_{a^{\prime},b^{\prime}\in I}\mathbb{P}\left(  \left(  1,a^{\prime
}\right)  \in\Lambda_{t}^{2}\right)  \mathbb{P}\left(  \left(  2,b^{\prime
}\right)  \in\Lambda_{t}^{2}\right)  \mathbb{E}\left[  f\left( t, W_{t}%
^{1}-W_{t}^{2}+X_{0}^{1}-X_{0}^{2}\right)  \right]
\]
where $W_{t}^{i}$, $i=1,2$ are two independent Brownian motions in
$\mathbb{R}^{d}$, independent also of $X_{0}^{1},X_{0}^{2}$. We may simplify
the previous expression to
\[
=N_0^{2}\left(  \sum_{a^{\prime}\in I}\mathbb{P}\left(  \left(  1,a^{\prime
}\right)  \in\Lambda_{t}^{1}\right)  \right)  ^{2}\mathbb{E}\left[  f\left(t,
\sqrt{2}W_{t}+X_{0}^{1}-X_{0}^{2}\right)  \right]
\]
where $W_{t}$ is a Brownian motion in $\mathbb{R}^{d}$ independent of
$X_{0}^{1},X_{0}^{2}$. One has
\begin{align*}
\sum_{a^{\prime}\in I}\mathbb{P}\left(  \left(  1,a^{\prime}\right)
\in\Lambda_{t}^{1}\right)    & =\mathbb{E}\left[  \sum_{a\in\Lambda_{t}^{1}%
}1\right]  =\mathbb{E}\left[  \left\vert \Lambda_{t}^{1}\right\vert \right]
\\
& \leq\mathbb{E}\left[  \left\vert \Lambda_{T}^{1}\right\vert \right]  =C_{T}.
\end{align*}
Moreover, denoting $\overline{u_{0}}\left(  x\right)  =u_{0}\left(  -x\right)
$,
\begin{align*}
&\mathbb{E}\left[  f\left( t, \sqrt{2}W_{t}+X_{0}^{1}-X_{0}^{2}\right)  \right]
 =\|u_0\|_{L^1}^{-2}\int\mathbb{E}\left[  f\left( t, \sqrt{2}W_{t}+x\right)  \right]  \left(
\overline{u_{0}}\ast u_{0}\right)  \left(  x\right)  dx\\
& =\|u_0\|_{L^1}^{-2}\left\langle e^{t\Delta}f(t,\cdot),\overline{u_{0}}\ast u_{0}\right\rangle
=\|u_0\|_{L^1}^{-2}\left\langle f(t,\cdot),e^{t\Delta}\left(  \overline{u_{0}}\ast u_{0}\right)
\right\rangle \\
& =\|u_0\|_{L^1}^{-2}\int f\left( t, x\right)  \mathbb{E}\left[  \left(  \overline{u_{0}}\ast
u_{0}\right)  \left(  \sqrt{2}W_{t}+x\right)  \right]  dx.
\end{align*}
Now we may estimate%
\begin{align*}
\mathbb{E}\left[  \left(  \overline{u_{0}}\ast u_{0}\right)  \left(  \sqrt
{2}W_{t}+x\right)  \right]    & \leq\gamma%
^{2}\mathbb{E}\left[  1_{B\left(  0,R\right)  }\left(  \sqrt{2}W_{t}+x\right)
\right]  \\
& =\gamma^{2}\mathbb{P}\left(  \sqrt{2}W_{t}\in
B\left(  x,R\right)  \right)  \\
& \leq\gamma^{2}\mathbb{P}\left(  \left\vert
\sqrt{2}W_{t}\right\vert \geq\left\vert x\right\vert -R\right)  \\
& \leq\gamma^{2}e^{cR}e^{-c\left\vert
x\right\vert }%
\end{align*}
for some constant $c=c(d,T)>0$. Therefore, summarizing,
\begin{align*}
& N_0^{2}\left(  \sum_{a^{\prime}\in I}\mathbb{P}\left(  \left(  1,a^{\prime
}\right)  \in\Lambda_{t}^{1}\right)  \right)  ^{2}\mathbb{E}\left[  f\left(t,
\sqrt{2}W_{t}+X_{0}^{1}-X_{0}^{2}\right)  \right]  \\
& \leq N_0^{2}C_{T}^{2}\|u_0\|_{L^1}^{-2}\gamma^{2}e^{cR}\int
f\left( t, x\right)  e^{-c\left\vert x\right\vert }dx.
\end{align*}
This completes the proof of \eqref{general-double}.

\textbf{Step 4}. Now we turn to demonstrate \eqref{general-triple}, i.e.
\begin{align*}
&\E\left[\sum_{a,b,c\in\Lambda_t^N}f(t,x_t^a-x_t^b)g(x_t^a-x_t^c)\right]=\sum_{a,b,c\in\Lambda^N}\E\left[1_{a\in\Lambda_t^N}1_{b\in\Lambda_t^N}1_{c\in\Lambda_t^N}f(t,x_t^a-x_t^b)g(x_t^a-x_t^c)\right].
\end{align*}
We devide the above sum into five cases:
\begin{align*}
&\sum_{a,b,c\in\Lambda^N: a_1=b_1=c_1}+\sum_{a,b,c\in\Lambda^N: a_1=b_1\neq c_1}+\sum_{a,b,c\in\Lambda^N: a_1=c_1\neq b_1}+\sum_{a,b,c\in\Lambda^N: b_1=c_1\neq a_1}+\sum_{a,b,c\in\Lambda^N: a_1\neq b_1, a_1\neq c_1, b_1\neq c_1}\\
&:=S_1+S_2+S_3+S_4+S_5.
\end{align*}
Firstly, 
\begin{align*}
S_1&\le \|f\|_\infty\|g\|_\infty\sum_{a_1=1}^{N_0}\sum_{a',b',c'\in I}\E\left[1_{(a_1,a')\in \Lambda_t^N}1_{(a_1,b')\in\Lambda_t^N}1_{(a_1,c')\in\Lambda_t^N}\right]\\
&\le N_0\|f\|_\infty\|g\|_\infty\E\left[\left|\Lambda_t^1\right|^3\right]\le 
N_0\|f\|_\infty\|g\|_\infty5C_T^3.
\end{align*}
Secondly, 
\begin{align*}
S_2&\le \|f\|_\infty\sum_{a,b,c\in\Lambda^N: a_1=b_1\neq c_1}\E\left[1_{a,b,c\in\Lambda_t^N}g\left(x_t^a-x_t^c\right)\right]\\
&=\|f\|_\infty\sum_{a_1\neq c_1}\sum_{a',b',c'\in I}\E\left[1_{(a_1,a')\in\Lambda_t^N}1_{(a_1,b')\in\Lambda_t^N}1_{(c_1,c')\in\Lambda_t^N}g\left(x_t^{(a_1,a')}-x_t^{(c_1,c')}\right)\right]\\
&\le \|f\|_\infty N_0^2\sum_{a',b',c'\in I}\E\left[1_{(1,a')\in\Lambda_t^1}1_{(1,b')\in\Lambda_t^1}1_{(2,c')\in\Lambda_t^2}g\left(x_t^{(1,a')}-x_t^{(2,c')}\right)\right]\\
&= \|f\|_\infty N_0^2\sum_{a',b',c'\in I}\E\left[1_{(1,a')\in\Lambda_t^1}1_{(1,b')\in\Lambda_t^1}1_{(2,c')\in\Lambda_t^2}g\left(\wt x_t^{(1,a')}- \wt x_t^{(2,c')}\right)\right].
\end{align*}
Noting that $\wt x_t^{(1,\cdot)}, \wt x_t^{(2,\cdot)}$ are independent processes
\begin{align*}
&=\|f\|_\infty N_0^2\sum_{a',b',c'\in I}\P\left((1,a'), (1,b')\in\Lambda_t^1\right)\P\left((2,c')\in\Lambda_t^2\right)\E\left[g\left(W_t^1-W_t^2+X_0^1-X_0^2\right)\right]\\
&\le \|f\|_\infty N_0^2\E\left[\left|\Lambda_t^1\right|^2\right]\E\left[\left|\Lambda_t^2\right|\right]\E\left[g\left(W_t^1-W_t^2+X_0^1-X_0^2\right)\right]
\end{align*}
for two independent auxiliary Brownian motions $W_t^1, W_t^2$. Similarly to already analyzed in Step 3, it is bounded by 
\begin{align*}
\le\|f\|_\infty N_0^22C_T^3\|u_0\|_{L^1}^{-2}\gamma^2e^{cR}\int g(x)e^{-c|x|}dx.
\end{align*}
Thirdly, 
\begin{align*}
S_3&\le \|g\|_\infty\sum_{a,b,c\in\Lambda^N: a_1=c_1\neq b_1}\E\left[1_{a,b,c\in\Lambda_t^N}f\left(t,x_t^a-x_t^b\right)\right]\\
&=\|g\|_\infty\sum_{a_1\neq b_1}\sum_{a',b',c'\in I}\E\left[1_{(a_1,a')\in\Lambda_t^N}1_{(b_1,b')\in \Lambda_t^N}1_{(a_1,c')\in\Lambda_t^N}f\left(x_t^{(a_1,a')}-x_t^{(b_1,b')}\right)\right]\\
&\le \|g\|_\infty N_0^2\sum_{a',b',c'\in I}\E\left[1_{(1,a')\in\Lambda_t^1}1_{(2,b')\in \Lambda_t^2}1_{(1,c')\in\Lambda_t^1}f\left(x_t^{(1,a')}-x_t^{(2,b')}\right)\right]\\
&=\|g\|_\infty N_0^2\sum_{a',b',c'\in I}\P\left((1,a'),(1,c')\in\Lambda_t^1\right)\P\left((2,b')\in\Lambda_t^2\right)\E\left[f\left(W_t^1-W_t^2+X_0^1-X_0^2\right)\right]\\
&=\|g\|_\infty N_0^2\E\left[\left|\Lambda_t^1\right|^2\right]\E\left[\left|\Lambda_t^2\right|\right]\E\left[f\left(W_t^1-W_t^2+X_0^1-X_0^2\right)\right]\\
&\le\|g\|_\infty N_0^22C_T^3\|u_0\|_{L^1}^{-2}\gamma^2e^{cR}\int f(t,x)e^{-c|x|}dx.
\end{align*}
The analysis of $S_4$ is analogous to $S_3$, and finally, 
\begin{align*}
S_5&=\sum_{a,b,c\in \Lambda^N: a_1\neq b_1 \neq c_1}\E\left[1_{a,b,c\in\Lambda_t^N}f\left(t,x_t^a-x_t^b\right)g\left(x_t^a-x_t^c\right)\right]\\
&\le N_0^3\sum_{a',b',c'\in I}\E\left[1_{(1,a')\in \Lambda_t^1}1_{(2,b')\in\Lambda_t^2}1_{(3,c')\in \Lambda_t^3}f\left(t,x_t^{(1,a')}-x_t^{(2,b')}\right)g\left(x_t^{(1,a')}-x_t^{(3,c')}\right)\right]\\
&= N_0^3\sum_{a',b',c'\in I}\E\left[1_{(1,a')\in \Lambda_t^1}1_{(2,b')\in\Lambda_t^2}1_{(3,c')\in \Lambda_t^3}f\left(t,\wt x_t^{(1,a')}-\wt x_t^{(2,b')}\right)g\left(\wt x_t^{(1,a')}-\wt x_t^{(3,c')}\right)\right]
\end{align*}
Noting that $\wt x_t^{(1,\cdot)}, \wt x_t^{(2, \cdot)}, \wt x_t^{(3, \cdot)}$ are independent processes, 
\begin{align*}
=N_0^3\sum_{a',b',c'\in I}\P\left((1,a')\in\Lambda_t^1\right)&\P\left((2,b')\in\Lambda_t^2\right)\P\left((3,c')\in\Lambda_t^3\right)\\
&\cdot\E\left[f\left(t,W_t^1-W_t^2+X_0^1-X_0^2\right)g\left(W_t^1-W_t^3+X_0^1-X_0^3\right)\right]\\
=N_0^3\E\left[\left|\Lambda_t^1\right|\right]\E\left[\left|\Lambda_t^2\right|\right]\E\left[\left|\Lambda_t^3\right|\right]&
\E\left[f\left(t,W_t^1-W_t^2+X_0^1-X_0^2\right)g\left(W_t^1-W_t^3+X_0^1-X_0^3\right)\right],
\end{align*}
for three independent auxiliary Brownian motions $W_t^1, W_t^2, W_t^3$. Conditioning on $W_t^1, W_t^2, X_0^1, X_0^2$, and we compute
\begin{align*}
&\E\left[g\left(W_t^1-W_t^3+X_0^1-X_0^3\right)\; \Big|\; W_t^1, W_t^2, X_0^1, X_0^2\right]\\
&=\|u_0\|_{L^1}^{-1}\E\left[\int g\left(W_t^1+X_0^1-x\right)\left(e^{\frac{1}{2}\Delta}u_0\right)(x)dx\; \Big|\; W_t^1, W_t^2, X_0^1, X_0^2\right]\\
&\le \|u_0\|_{L^1}^{-1}\gamma\|g\|_{L^1}.
\end{align*}
Thus, we obtain that 
\begin{align*}
S_5&\le N_0^3C_T^3\|u_0\|_{L^1}^{-1}\gamma\|g\|_{L^1}\E\left[f\left(t,W_t^1-W_t^2+X_0^1-X_0^2\right)\right]\\
&\le N_0^3C_T^3\|u_0\|_{L^1}^{-3}\gamma^3\|g\|_{L^1}e^{cR}\int f(t,x)e^{-c|x|}dx.
\end{align*}
This completes the proof of \eqref{general-triple}.
\end{proof}

\begin{corollary}\label{cor:double}
Let $d\ge 1$ and $\ep=\ep(N)$ as in the statement of the main theorem. For any $T$ finite and $\phi, \psi\in C_c^\infty([0,T)\times\R^d)$, we have that 
\begin{align}
&\limsup_{|z|\to0}\limsup_{N\to\infty}\nonumber\\
&\quad\E\int_0^T\frac{1}{N^2}\sum_{j,k\le N(t)}\left|r^\ep(t,x_j(t)-x_k(t)+z)-r^\ep(t,x_j(t)-x_k(t))\right||\phi|(t,x_j(t))|\psi|(t,x_k(t))dt=0. \label{double-sum-1}\\
&\limsup_{|z|\to0}\limsup_{N\to\infty}\nonumber\\
&\quad\E\int_0^T\frac{1}{N^2}\sum_{j,k\le N(t)}\left|\nabla r^\ep(t,x_j(t)-x_k(t)+z)-\nabla r^\ep(t,x_j(t)-x_k(t))\right||\phi|(t,x_j(t))|\psi|(t,x_k(t))dt=0.\label{double-sum-2}\\
&\limsup_{|z|\to0}\limsup_{N\to\infty}  \nonumber\\
&\E\int_0^T\frac{1}{N^3}\sum_{i,j,k\le N(t)}\theta^\ep\left(x_j(t)-x_k(t)\right)\left|r^\epsilon(t, x_j(t)-x_i(t)+z)-r^\epsilon(t, x_j(t)-x_i(t))\right||\phi|(t,x_i(t))|\psi|(t, x_j(t))dt =0.\label{triple-sum}
\end{align}
\end{corollary}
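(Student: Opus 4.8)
\medskip
\noindent\emph{Sketch of the argument.} All three estimates have the same structure, and the plan is to derive each of them from the double-/triple-sum bounds of Proposition~\ref{progenitor} combined with the pointwise estimates of Proposition~\ref{diff-est}. The first step is to discard the killing mechanism: coupling our system \eqref{pr-ge} to the pure-proliferation model of Proposition~\ref{progenitor} in the obvious way (our system is obtained by additionally killing particles together with all their descendants), the alive particles of our system at any time $t$ form a subset of those of the pure-proliferation model. Since the integrands in \eqref{double-sum-1}, \eqref{double-sum-2} and \eqref{triple-sum} are nonnegative (the functions $|\phi|,|\psi|,\theta^\ep$ being $\ge0$), the three expectations are bounded above by the same quantities evaluated for the pure-proliferation model, which starts from $N_0=N\int u_0\le CN$ particles; from here on $x_j(t)$ denotes that model.

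The second, and really the only substantive, ingredient is the identity $r^\ep(t,\cdot)=\theta^\ep*K(T-t,\cdot)$, hence $\nabla_x r^\ep(t,\cdot)=\theta^\ep*\nabla_x K(T-t,\cdot)$, already recorded in the proof of Proposition~\ref{diff-est}, with $K$ the kernel \eqref{green}. Since $\theta^\ep\ge0$ with $\int\theta^\ep=1$, Young's inequality and the commutation of convolution with translation give for every $t$ and $z$ the \emph{$\ep$-uniform} bound $\|r^\ep(t,\cdot+z)-r^\ep(t,\cdot)\|_{L^1}\le\|K(T-t,\cdot+z)-K(T-t,\cdot)\|_{L^1}$, and likewise with $\nabla$ throughout. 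As $K(s,\cdot)\in L^1(\R^d)$ with $\|K(s,\cdot)\|_{L^1}=s\le T$ and $\|\nabla K(s,\cdot)\|_{L^1}\le C\sqrt s$ for $s\in[0,T]$, continuity of translation in $L^1$ and dominated convergence in $t$ give $\int_0^T\|K(T-t,\cdot+z)-K(T-t,\cdot)\|_{L^1}\,dt\to0$ and $\int_0^T\|\nabla K(T-t,\cdot+z)-\nabla K(T-t,\cdot)\|_{L^1}\,dt\to0$ as $|z|\to0$, uniformly in $N$.

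For \eqref{double-sum-1}, I would pull out $\|\phi\|_\infty\|\psi\|_\infty$, apply \eqref{general-double} to $\sum_{j,k}f(t,x_j(t)-x_k(t))$ with $f(t,y):=|r^\ep(t,y+z)-r^\ep(t,y)|$, divide by $N^2$, integrate in $t$, and bound $e^{-c|x|}\le1$; the result is at most $C\big(N^{-1}\int_0^T\|f(t,\cdot)\|_\infty\,dt+\int_0^T\|f(t,\cdot)\|_{L^1}\,dt\big)$. The first term is $\lesssim N^{-1}\sup_{t\le T}\|r^\ep(t,\cdot)\|_\infty$, which by \eqref{unif-bd} is $\lesssim N^{-1}\ep^{2-d}$ for $d\ge3$ (and $\lesssim N^{-1}|\log\ep|$, resp.\ $\lesssim N^{-1}$, for $d=2$, resp.\ $d=1$), hence $\to0$ as $N\to\infty$ because $\ep^{-d}\le CN$ yields $N^{-1}\ep^{2-d}=\ep^2(\ep^{-d}/N)\le C\ep^2\to0$; the second term is $\le\int_0^T\|K(T-t,\cdot+z)-K(T-t,\cdot)\|_{L^1}\,dt$ uniformly in $N$, hence $\to0$ as $|z|\to0$ by the previous paragraph. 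This is \eqref{double-sum-1}; statement \eqref{double-sum-2} is identical with $\nabla r^\ep$ in place of $r^\ep$, using \eqref{unif-bd-grad} (so the first term is $\lesssim N^{-1}\ep^{1-d}=\ep(\ep^{-d}/N)\le C\ep\to0$) and $\nabla r^\ep=\theta^\ep*\nabla K(T-t,\cdot)$.

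For \eqref{triple-sum}, I would pull out $\|\phi\|_\infty\|\psi\|_\infty$ and apply \eqref{general-triple} with $g=\theta^\ep$ (so $\|g\|_{L^1}=1$) and $f(t,y):=|r^\ep(t,y+z)-r^\ep(t,y)|$, up to a relabeling of indices and a sign flip $y\mapsto-y$ that leaves every relevant norm unchanged. Dividing by $N^3$, integrating in $t$, and using $N_0\le CN$ together with $\|\theta^\ep\|_\infty=\ep^{-d}\|\theta\|_\infty\le CN\|\theta\|_\infty$ to absorb the surviving $\ep^{-d}/N$ factors, the estimate reduces to $C\int_0^T\big(N^{-2}\|f(t,\cdot)\|_\infty\|\theta^\ep\|_\infty+N^{-1}\|f(t,\cdot)\|_\infty+\|f(t,\cdot)\|_{L^1}\big)\,dt$; here the last term vanishes in the iterated limit exactly as in \eqref{double-sum-1}, while $N^{-1}\|f\|_\infty\to0$ and $N^{-2}\|f\|_\infty\|\theta^\ep\|_\infty\lesssim(\ep^{-d}/N)^2\ep^2\le C\ep^2\to0$ (for $d\ge3$; the cases $d=1,2$ are easier via the logarithmic/bounded versions of \eqref{unif-bd}). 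This gives \eqref{triple-sum}. The one genuinely delicate point in all of the above is the $\ep$-uniformity of the translation estimates; it is exactly what the representation $r^\ep=\theta^\ep*K(T-t,\cdot)$ provides, and the remainder is the bookkeeping of checking that the blow-ups $\|r^\ep\|_\infty\sim\ep^{2-d}$, $\|\nabla r^\ep\|_\infty\sim\ep^{1-d}$, $\|\theta^\ep\|_\infty\sim\ep^{-d}$ are all defeated by the hypothesis $\ep^{-d}\le CN$, together with the coupling that lets Proposition~\ref{progenitor} apply.
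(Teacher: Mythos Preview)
Your proof is correct and follows essentially the same route as the paper: couple to the pure-proliferation system, apply Proposition~\ref{progenitor} with $f(t,y)=|r^\ep(t,y+z)-r^\ep(t,y)|$ (resp.\ its gradient version, resp.\ together with $g=\theta^\ep$), and use the convolution representation $r^\ep(t,\cdot)=\theta^\ep*K(T-t,\cdot)$ to get $\ep$-uniform control of the translation difference. The only cosmetic difference is that you discard the weight $e^{-c|x|}\le1$ and use Young's inequality to bound $\|r^\ep(t,\cdot+z)-r^\ep(t,\cdot)\|_{L^1}\le\|K(T-t,\cdot+z)-K(T-t,\cdot)\|_{L^1}$ directly, obtaining an $\ep$-free bound before taking $N\to\infty$; the paper instead keeps the weight, moves the convolution onto it via Fubini to get $\int|K(t,x+z)-K(t,x)|(\theta^\ep*e^{-c|\cdot|})(x)\,dx$, and then lets $\ep\to0$ by dominated convergence before sending $|z|\to0$. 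Both arguments hinge on the same $L^1$-continuity of translation for $K$ and $\nabla K$, and your handling of the extra $\ep^{-d}/N$ factors in the triple sum (via $\ep^{-d}\le CN$) matches the paper's.
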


\begin{proof}
Note that our particle system can be coupled with a system of pure proliferation of unit rate (with no killing), so that the former is a strict subset of the latter. Hence, it is an upper bound to compute \eqref{double-sum-1}-\eqref{triple-sum} for the pure proliferation process. We proceed to do so in the rest of the proof, while abusing notations, still using the letter $x_j(t)$ to denote particle positions (now for a different system), and $N(t)$ the cardinality of particles.

Firstly, by \eqref{general-double} applied to the function $f(t,x):=|r^\ep(t,x+z)-r^\ep(t,x)|$, and by \eqref{unif-bd}, upon bounding $\phi, \psi$ by constants, we get that
\begin{align*}
&C_{\phi,\psi}\E\int_0^T\frac{1}{N^2}\sum_{j,k\le N(t)}\left|r^\ep(t,x_j(t)-x_k(t)+z)-r^\ep(t,x_j(t)-x_k(t))\right|dt\\
&\le C_{\phi,\psi,T, u_0}\frac{1}{N}\left\Vert r^{\epsilon}\left( \cdot, \cdot+z\right)
-r^{\epsilon}\left( \cdot, \cdot\right)  \right\Vert _{\infty}\\
&\quad\quad\quad +C_{\phi,\psi,T,u_0}e^{cR}\left( \int_0^T \int\left\vert r^{\epsilon}\left(t,  x+z\right)
-r^{\epsilon}\left(t,  x\right)  \right\vert e^{-c\left\vert x\right\vert
}dxdt\right)\\
&\leq 
\begin{cases}
C_{\phi,\psi, T, u_0}^{\prime}\left(\frac{\epsilon^{2-d}}{N}+\int_0^T\int\left\vert r^{\epsilon}\left( t, x+z\right)  -r^{\epsilon}\left(t,
x\right)  \right\vert e^{-c\left\vert x\right\vert }dxdt\right), \quad d\neq 2,\\\\
C_{\phi,\psi,T, u_0}^{\prime}\left(\frac{|\log\ep|}{N}+\int_0^T\int\left\vert r^{\epsilon}\left( t, x+z\right)  -r^{\epsilon}\left(t,
x\right)  \right\vert e^{-c\left\vert x\right\vert }dxdt\right), \quad d=2.
\end{cases}
\end{align*}
The first term is negligible for our range of $\ep(N)$. 
For the second term, for the kernel $K$ defined at \eqref{green},
\begin{align*}
& \int_0^T\int\left\vert r^{\epsilon}\left( t, x+z\right)  -r^{\epsilon}\left(t,
x\right)  \right\vert e^{-c\left\vert x\right\vert }dxdt\\
& =\int_0^T\int\left\vert \left(  \theta^{\epsilon}\ast\left(  K\left( t, \cdot
+z\right)  -K\left(t,  \cdot\right)  \right)  \right)  \left(  x\right)
\right\vert e^{-c\left\vert x\right\vert }dxdt\\
& \leq\int_0^T\int\left(  \theta^{\epsilon}\ast\left\vert K\left( t, \cdot+z\right)
-K\left( t, \cdot\right)  \right\vert \right)  \left(  x\right)  e^{-c\left\vert
x\right\vert }dxdt\\
& = \int_0^T \int\left\vert K\left(t,  x+z\right)  -K\left( t, x\right)  \right\vert \left(
\theta^{\epsilon}\ast e^{-c\left\vert \cdot\right\vert }\right)  \left(
x\right)  dxdt.
\end{align*}
Now we take the two limits; as $N\to\infty$ hence $\ep=\epsilon(N)\rightarrow0$, by Lebesgue dominated
convergence theorem we get
\[
\rightarrow\int_0^T\int\left\vert K\left( t, x+z\right)  -K\left(t,  x\right)  \right\vert
e^{-c\left\vert x\right\vert }dxdt.
\]
Then, as $\left\vert z\right\vert \rightarrow0$, again by Lebesgue dominated
convergence theorem we get that the limit is zero.

Next, the proof of \eqref{double-sum-2} is similar, and only involves a minor change. We apply \eqref{general-double} with the new function $f(t, x):=|\nabla r^\ep(t, x+z)-\nabla r^\ep(t, x)|$, and by \eqref{unif-bd-grad} we obtain for all $d\ge 1$,
\begin{align*}
&C_{\phi,\psi}\E\int_0^T\frac{1}{N^2}\sum_{j,k\le N(t)}\left|\nabla r^\ep(t,x_j(t)-x_k(t)+z)-\nabla r^\ep(t,x_j(t)-x_k(t))\right|\\
&\leq 
C_{\phi,\psi,T, u_0}^{\prime}\left(\frac{\epsilon^{1-d}}{N}+\int_0^T\int\left\vert \nabla r^{\epsilon}\left( t, x+z\right)  -\nabla r^{\epsilon}\left(t,
x\right)  \right\vert e^{-c\left\vert x\right\vert }dxdt\right).
\end{align*}
Then, we have that 
\begin{align*}
&\int_0^T\int|\nabla r^\ep(t,x+z)-\nabla r^\ep(t,x)|e^{-c|x|}dxdt\\
&\le \int_0^T\int|\nabla K(t,x+z)-\nabla K(t,x)|\left(\theta^\ep*e^{-c|\cdot|}\right)(x)dxdt
\end{align*}
still converges to zero as $N\to\infty$ followed by $|z|\to0$ by the dominated convergence theorem. Indeed, $|\nabla K(t,x)|$ has a singularity of order $|x|^{1-d}$ near $0$ hence integrable for all $d$.

Lastly, turning to \eqref{triple-sum}. By \eqref{general-triple}, applied to the functions $f(t,x):=|r^\ep(t,x+z)-r^\ep(t,x)|$, $g(x)=\theta^\ep(x)$, and by \eqref{unif-bd}, upon bounding $\phi, \psi$ by constants, we get that 
\begin{align*}
&C_{\phi,\psi}\E\int_0^T\frac{1}{N^3}\sum_{i,j,k\le N(t)}\theta^\ep\left(x_j(t)-x_k(t)\right)\left|r^\epsilon(t, x_j(t)-x_i(t)+z)-r^\epsilon(t, x_j(t)-x_i(t))\right|dt\\
&\le C_{\phi,\psi,T,u_0}\frac{1}{N^2}\ep^{2-2d}\|\theta\|_\infty\\
&+C_{\phi,\psi,T,u_0}\frac{1}{N}\left(\ep^{2-d}\int \theta^\ep(x)e^{-|x|}dx+\ep^{-d}\int_0^T\int |r^\ep(t,x+z)-r^\ep(t,x)|e^{-c|x|}dxdt\right)\\
&+C_{\phi,\psi,T,u_0}\|\theta^\ep\|_{L^1}\int_0^T\int |r^\ep(t,x+z)-r^\ep(t,x)|e^{-c|x|}dxdt\\
&\le C'_{\phi,\psi,T,u_0}\ep^2(\|\theta\|_\infty+\|\theta^\ep\|_{L^1})+C'_{\phi,\psi,T,u_0}\int_0^T\int |r^\ep(t,x+z)-r^\ep(t,x)|e^{-c|x|}dxdt
\end{align*}
if $d\neq 2$, and when $d=2$ there is a $|\log\ep|$ correction, 
where we also used the relation $\ep(N)^{-d}\le CN$.
Since the first term is negligible in $\ep$, and the second term is already analyzed in \eqref{double-sum-1}, converging to zero as $N\to\infty$ followed by $|z|\to 0$, the lemma is proved.
\end{proof}

\begin{remark}
Though Corollary \ref{cor:double} does not give a rate of convergence for the quantities involved, via a different proof we can have quantitative estimates that may be of independent interest: there exists some finite constant $C=C(T,d,C_0,R, \gamma)$ such that for any $0<\ep\le |z|$ small enough, we have
\begin{align}
&\E\int_0^T\frac{1}{N^2}\sum_{j,k\le N(t)}\left|r^\ep(t,x_j(t)-x_k(t)+z)-r^\ep(t,x_j(t)-x_k(t))\right||\phi|(t,x_j(t))|\psi|(t,x_k(t))dt  \nonumber\\
&\le 
\begin{cases}
C\left(|z|^{\frac{2}{d+1}}+\frac{\ep^{2-d}}{N}\right), \quad d\neq 2,\\\\
C\left(|z|^{\frac{2}{3}}+\frac{|\log\ep|}{N}\right), \quad d=2
\end{cases}
\end{align}
and
\begin{align}
&\E\int_0^T\frac{1}{N^2}\sum_{j,k\le N(t)}\left|\nabla r^\ep(t,x_j(t)-x_k(t)+z)-\nabla r^\ep(t,x_j(t)-x_k(t))\right||\phi|(t,x_j(t))|\psi|(t,x_k(t))dt  \nonumber\\
&\le C\left(|z|^{\frac{1}{d+1}}+\frac{\ep^{1-d}}{N}\right), \quad d\ge1
\end{align}
and 
\begin{align}
&\E\int_0^T\frac{1}{N^3}\sum_{i,j,k\le N(t)}\theta^\ep(x_j(t)-x_k(t))\left|r^\epsilon(t, x_j(t)-x_i(t)+z)-r^\epsilon(t, x_j(t)-x_i(t))\right||\phi|(t,x_i(t))|\psi|(t, x_j(t))dt  \nonumber\\
&\le
\begin{cases}
C\left(|z|^{\frac{2}{d+1}}+\frac{\ep^{-d}}{N}|z|^{\frac{2}{d+1}}+\frac{\ep^{2-d}}{N}\right), \quad d\neq 2,\\\\
C\left(|z|^{\frac{2}{d+1}}+\frac{\ep^{-d}}{N}|z|^{\frac{2}{d+1}}+\frac{|\log\ep|}{N}\right), \quad d=2.
\end{cases}
\end{align}
\end{remark}

\begin{remark}\label{rmk:mg}
When we proceed to bound the martingale terms $B^{(1)}$, $B^{(2)}$ \eqref{mg}-\eqref{mg2}, we are faced with a minor problem not present in Corollary \ref{cor:double}, namely, after applying the elementary inequality $(\sum_{i=1}^n a_i)^2\le n\sum_{i=1}^na_i^2$, we have sums of square terms $|r^\ep|^2$ or $|\nabla r^\ep|^2$. This can be dealt with, by bounding one of $|r^\ep|$ (resp. $|\nabla r^\ep|$) in the square crudely by $C\ep^{2-d}$ ($d\neq 2$) or $C|\log\ep|$ ($d=2$) (resp. $C\ep^{1-d}$), and leave with the other one, for which we are back to one of the three statements of Corollary \ref{cor:double}. Also note that we are saved by the prefactor $N^{-4}$ in this case.
\end{remark}
{\bf{Step III}}.
Given Proposition \ref{progenitor} and Corollary \ref{cor:double}, we can proceed to finish the proof of Proposition \ref{product}, as in \cite[page 42-43]{HR}. By applying this corollary, together with Remark \ref{rmk:mg}, we see that the terms coming out of the application of It\^o-Tanack trick, namely $H_0, H_t, H_J, H_x, H_C, B^{(1)}, B^{(2)}$, all vanish in the limit as $N\to\infty$ followed by $|z|\to 0$ (where $\ep=\ep(N)\to 0$ is such that $\ep^{-d}\le CN$). The only outstanding term is $H_{xx}$, whereby we get that 
\begin{align}\label{boltzmann}
&\int_0^T\frac{1}{N^2}\sum_{j,k\le N(t)}\theta^\ep(x_j(t)-x_k(t))\phi(t,x_j(t))\psi(t,x_k(t))dt   \nonumber\\
&=\int_0^T\frac{1}{N^2}\sum_{j,k\le N(t)}\theta^\ep(x_j(t)-x_k(t)+z)\phi(t,x_j(t))\psi(t,x_k(t))dt+Err(\ep,N, |z|)
\end{align}
where $Err(\ep,N, |z|)$ vanishes in the following limit
\begin{align*}
\limsup_{|z|\to0}\limsup_{N\to\infty}\E\left|Err(\ep,N,|z|)\right|=0.
\end{align*}
Since the \abbr{LHS} of \eqref{boltzmann} is independent of $z$, take any nonnegative smooth and compactly supported function $\eta: \R^d\to\R_+$ with $\int_{\R^d}\eta=1$, we have 
\begin{align}\label{artificial}
&\int_0^T\frac{1}{N^2} \sum_{j,k\le N(t)}\theta^\ep\left(x_j(t)-x_k(t)\right)\phi(t,x_j(t))\psi(t,x_k(t))\nonumber\\
&= \int_0^T\iint_{\R^{2d}} \frac{1}{N^2}\sum_{j,k}\theta^\ep\left(x_j(t)-x_k(t)-z_1+z_2\right)\eta^\delta(z_1)\eta^\delta(z_2)\phi(t,x_j(t))\psi(t,x_k(t))dz_1dz_2dt+ Err(\ep, N,\delta)
\end{align}
with
\[
\limsup_{\delta\to 0}\limsup_{N\to\infty}\E|Err(\ep,N,\delta)|=0.
\]
Shifting the arguments of $\phi(t,\cdot), \psi(t,\cdot)$ in \eqref{artificial} by $z_1$ and $z_2$, respectively (with the latter two in the support of $\eta^\delta$), it can be shown that we cause an error of $O(\delta)$ in expectation, whereby we rewrite \eqref{artificial}
\begin{align*}
&\int_0^T\iint_{\R^{2d}} \frac{1}{N^2}\sum_{j,k}\theta^\ep\left(x_j(t)-x_k(t)-z_1+z_2\right)\eta^\delta(z_1)\eta^\delta(z_2)\phi(t,x_j(t)-z_1)\psi(t,x_k(t-z_2))dz_1dz_2dt+ Err_1(\ep,N, \delta)\\
&=\frac{1}{N^2}\int_0^T\iint_{\R^{2d}}\theta^\ep(w_1-w_2) \phi(t,w_1)\psi(t, w_2)\sum_{j\le N(t)}\eta^\delta\left(x_j(t)-w_1\right)\sum_{k\le N(t)}\eta^\delta\left(x_k(t)-w_2\right)dw_1dw_2dt + Err_1(\ep,N,\delta)\\
&= \int_0^T\int_{\R^{2d}} \theta^\ep(w_1-w_2)\phi(t,w_1)\psi(t, w_2)\left(\eta^\delta*_x\xi^N\right)(t, w_1) \left(\eta^\delta*_x\xi^N\right)(t, w_2)dw_1dw_2dt+Err_1(\ep,N, \delta),
\end{align*}
where the second line is a change of the order of integration, and
\[
\E|Err_1(\delta,N,\ep)|=\E|Err(\delta,N,\ep)|+O(\delta).
\] 
Since $\eta^\delta, \phi, \psi$ are all smooth, and $|w_1-w_2|<2C_0\ep$ within the support of $\theta^\ep$, changing the $w_2$ to $w_1$ in the argument of $\eta^\delta$ and $\psi(t,\cdot)$ can be shown to cause an error on the order $O(\ep\delta^{-2d-1})$ in expectation, and we can rewrite the above further
\begin{align*}
&\int_0^Tdt\int_{\R^d}dw_1\left(\int_{\R^d}dw_2\; \theta^\ep(w_1-w_2)\right)\phi(t,w_1)\psi(t, w_1)\left(\eta^\delta*_x\xi^N\right)(t,w_1)^2+Err_2(\ep,N, \delta)\\
&=\int_0^Tdt\int_{\R^d}dw_1\phi(t,w_1)\psi(t, w_1)\left(\eta^\delta*_x\xi^N\right)(t, w_1)^2+Err_2(\ep,N, \delta)\,,
\end{align*}
since $\int\theta^\ep=1$, where 
\[
\E|Err_2(\ep,N, \delta)|=\E|Err_1(\ep,N, \delta)|+O(\ep\delta^{-2d-1}). 
\]
Since $\ep(N)\to 0$ with $N$,  $\E|Err_2(\ep,N, \delta)|$ vanishes with $N$ and $\delta$ in the right order. This completes the proof of Proposition \ref{product}.   $\QED$
\\\\
To complete the proof of Theorem \ref{thm-main}, another ingredient is the tightness of the sequence of measures $\{\cP^N\}_N$ in $\cP(\cM)$, and the properties of its weak subsequential limits, as discussed next.

\begin{lemma}\label{tight}
Let any $d\ge 1$ and $T$ finite. The sequence $\{\cP^N\}_N$ induced by $\{\omega\mapsto \xi^N(dt,dx,\omega)\}_N$ is tight in $\cP(\cM)$, hence relatively compact in the weak topology.
\end{lemma}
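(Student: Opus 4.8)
The plan is to establish tightness of $\{\cP^N\}_N$ on $\cM=\cM_T$ by a standard two-step argument: first control the total mass of $\xi^N$ on $[0,T]\times\R^d$ and the mass escaping to spatial infinity (to get tightness of the ``marginals''), then use an Aldous-type / modulus-of-continuity estimate to handle the time variable. Since $\cM$ carries the weak topology of nonnegative finite measures on $[0,T]\times\R^d$, I would in fact work with the associated process $t\mapsto \mu^N_t(dx):=\tfrac1N\sum_{j\le N(t)}\delta_{x_j(t)}(dx)$ taking values in the space of finite measures on $\R^d$, show tightness of the laws of these processes in $D([0,T];\cM_+(\R^d))$ (or in $C$ after noting jumps are $O(1/N)$), and then observe that $\xi^N(dt,dx)=\mu^N_t(dx)\,dt$ so tightness transfers to $\cP(\cM)$.

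\textbf{Step 1: uniform mass bounds.} From the It\^o computation for $N(t)$ already performed in the proof of Lemma \ref{dm-theta}, we have $\E N(t)\le N e^t\int u_0$ uniformly on $[0,T]$, so $\E\,\xi^N([0,T]\times\R^d)=\tfrac1N\E\int_0^T N(t)\,dt\le e^T\int u_0$ is bounded uniformly in $N$. By Markov's inequality this gives tightness of the total-mass real random variables, hence (together with the next step) of the measures restricted to compacts.

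\textbf{Step 2: no mass escapes to infinity.} I would test $\mu^N_t$ against a smooth cutoff $\chi_R(x)$ equal to $0$ on $\B(0,R)$ and $1$ outside $\B(0,2R)$, with bounded derivatives, and apply It\^o's formula to $\tfrac1N\sum_{j\le N(t)}\chi_R(x_j(t))$ exactly as in \eqref{main-ito} (with $\phi=\chi_R$, time-independent). The diffusion part contributes $\tfrac1{2N}\sum\Delta\chi_R(x_j(t))$, bounded by $C R^{-2}$ times the mass; the proliferation part contributes at most $\tfrac1N\sum_{j\le N(t)}\chi_R(x_j(t))$ plus a vanishing martingale (controlled in $L^2$ by the same quadratic-variation bound as before, using $\E N(t)\le Ne^T\int u_0$). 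Since particles are born only at locations of existing particles and perform Brownian motion, and the initial data is supported in $\B(0,R_0)$, a Gronwall argument in $R$ together with Gaussian tail bounds for Brownian motion (as already used in Proposition \ref{progenitor}) yields $\sup_N\E\int_0^T \mu^N_t(\{|x|\ge R\})\,dt\to 0$ as $R\to\infty$. Combined with Step 1, this gives tightness of the ``spatial marginal'' $\tfrac1N\int_0^T\mu^N_t(dx)\,dt$, i.e.\ of the projection of $\xi^N$ onto the $x$-variable; tightness in $t$ is automatic since the $t$-marginal of $\xi^N$ is just (a multiple of) Lebesgue measure on $[0,T]$.

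\textbf{Step 3: joint tightness and conclusion.} Because $\cM$ is the space of nonnegative finite measures on the product $[0,T]\times\R^d$ with the weak topology, and $[0,T]$ is compact, joint tightness follows from tightness of the two marginals together with the uniform total-mass bound: a set of measures with uniformly bounded mass, whose $t$-marginals are all dominated by Lebesgue and whose $x$-marginals are tight, is relatively compact in the weak topology (one can also argue directly via Prohorov, exhibiting for each $\rho>0$ a compact $[0,T]\times \ovl{\B(0,R_\rho)}$ carrying all but $\rho$ of the mass, uniformly in $N$). Hence $\{\cP^N\}_N$ is tight, and by Prohorov's theorem relatively compact in $\cP(\cM)$.

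The main obstacle is Step 2: one must genuinely use the structure of the dynamics — that new particles appear at the positions of existing ones rather than anywhere in space — to prevent the proliferation mechanism from creating mass arbitrarily far out. The cleanest way to do this is to dominate our system by the pure-proliferation system of Proposition \ref{progenitor} (our particle set is a subset of it) and invoke the exponential spatial decay $\int f(x)e^{-c|x|}dx$-type bounds already established there with $f=\chi_R$; this reduces the escape-of-mass estimate to a computation essentially identical to \eqref{general-double}. Everything else is a routine It\^o-formula-plus-Gronwall argument with martingale terms killed by the $N^{-1}$ (or $N^{-2}$ in the quadratic variation) prefactors, exactly as in the derivation of \eqref{main-ito}.
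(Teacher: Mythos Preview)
Your proposal is correct in outline, but it takes a longer road than the paper and opens with a detour that is both unnecessary and explicitly avoided by the authors.

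The paper's proof is a single moment estimate. It observes that the sublevel sets
\[
K_A=\Big\{\mu\in\cM:\ \int_{[0,T]\times\R^d}(1+|x|)\,\mu(dt,dx)\le A\Big\}
\]
are relatively compact in $\cM$ (this encodes \emph{both} the uniform mass bound and the ``no escape to infinity'' condition at once), and then shows $\E\int_0^T\tfrac1N\sum_{j\le N(t)}(1+|x_j(t)|)\,dt\le C_*\|u_0\|_{L^1}$ by coupling to the pure-proliferation system of Proposition~\ref{progenitor}. Markov's inequality finishes. So your Steps~1 and~2 are fused into one estimate, and your Step~2 computation with a cutoff $\chi_R$ and It\^o's formula is replaced by the cleaner choice of testing against $1+|x|$ directly on the dominating branching system.

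A second difference: your opening plan proposes to pass through tightness of $t\mapsto\mu^N_t$ in $D([0,T];\cM_+(\R^d))$ via an Aldous-type criterion. This is not needed here---$\cM$ is just finite measures on $[0,T]\times\R^d$, with $[0,T]$ compact, so Prohorov reduces to controlling total mass and the $x$-marginal, exactly as you yourself realize in Step~3. In fact the paper goes out of its way (see the discussion preceding Definition~\ref{weak-sol}) to \emph{avoid} process-level tightness in $\cD$, since it is not required for the weak formulation with $K(T,\cdot)=0$. Your Steps~1--3 as written do not actually use the Aldous criterion, so the mention of it is harmless but misleading.

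In short: your argument works, but the paper's is shorter---one weighted-mass moment in place of your two separate steps, and no path-space tightness.
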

\begin{proof}
Firstly, note that subsets of the form 
\begin{align*}
K_A:=\left\{\mu\in \cM:\; \int_{[0,T]\times\R^d}\left(1+|x|\right)\mu(dt,dx)\le A\right\}
\end{align*}
are relatively compact in $\cM$. Indeed, uniformly for $\mu\in K_A$, we have that 
\begin{align*}
\mu\left(\left([0,T]\times\ovl\B(0,L)\right)^c\right)\le L^{-1}\int_{\left([0,T]\times\ovl\B(0,L)\right)^c}|x|\mu(dt,dx)\le L^{-1}A
\end{align*}
for any $L>0$. 
Secondly, by coupling to a system of pure proliferation of unit rate, by a similar proof as Proposition \ref{progenitor}, we can show that 
\begin{align*}
\E\left[\int_0^T\sum_{j\le N(t)}\left(1+|x_j(t)|\right)dt\right]\le C_*N_0
\end{align*}
for some constant $C_*=C_*(d,T, R)$ finite. Thus, for any $\ep>0$, we can find $A=A(\ep)$ such that uniformly for all $N$, 
\begin{align*}
\cP^N\left({\ovl{K}_A}^c\right)&\le \P\left(\int_{[0,T]\times\R^d}\left(1+|x|\right)\xi^N(dt,dx)>A\right)\\
&\le A^{-1}\E\left[\int_0^T\frac{1}{N}\sum_{j\le N(t)}\left(1+|x_j(t)|\right)dt\right]\le A^{-1}C_*\|u_0\|_{L^1}<\ep
\end{align*}
by Markov's inequality. This implies that the sequence $\{\cP^N\}_N$ is tight.
\end{proof}

\begin{lemma}\label{density}
Let $d\ge 1$ and $T$ finite. Any weak subsequential limit $\cP^*$ of the sequence $\{\cP^N\}_N$ is supported on the subset of $\cM$ consiting of measures that are absolutely continuous with respect to the Lebesgue measure on $[0,T]\times\R^d$, with density bounded by a deterministic constant.
\end{lemma}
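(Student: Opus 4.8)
The plan is to control \emph{all} moments of the mollified empirical field
\[
f^{N,\delta}(t,w):=(\xi^N*_x\eta^\delta)(t,w)=\frac1N\sum_{j\le N(t)}\eta^\delta(w-x_j(t)),
\]
uniformly in $N$ and $\delta$, and then let the moment order $p\to\infty$. The target estimate is: for every nonnegative $\Phi\in C_c([0,T)\times\R^d)$ and every integer $p\ge1$,
\[
\limsup_{N\to\infty}\ \E\int_0^T\!\!\int_{\R^d}\Phi(t,w)\,f^{N,\delta}(t,w)^p\,dw\,dt\ \le\ M^p\,\|\Phi\|_{L^1},\qquad M:=e^{T}\gamma,
\]
uniformly over small $\delta>0$, where $\gamma$ is the $L^\infty$-bound on $u_0$ from Condition \ref{kpp-init}. (The case $p=2$ alone is also a consequence of Proposition \ref{product} together with Lemma \ref{dm-theta}, but controlling the essential supremum of the limiting density requires all $p$.)

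First I would expand $f^{N,\delta}(t,w)^p=N^{-p}\sum_{j_1,\dots,j_p}\prod_i\eta^\delta(w-x_{j_i}(t))$, couple the particle system to the pure-proliferation system of unit rate with no killing (as already used for Lemmas \ref{dm-theta}, \ref{tight} and Corollary \ref{cor:double}; since killing only removes particles and $\Phi,\eta^\delta\ge0$, this only increases the left-hand side, and $\ep(N)$ plays no role), and partition the $p$-fold sum according to which progenitor families the indices $j_1,\dots,j_p$ belong to, exactly as in the proof of Proposition \ref{progenitor}. On the block where all $p$ indices lie in distinct families the trajectories are independent, so the expectation factorizes and each factor equals $(\eta^\delta*\bar\rho^N_t)(w)\le\|\bar\rho^N_t\|_\infty\le e^{T}\gamma$, because the expected empirical density $\bar\rho^N_t$ of the pure-proliferation system solves $\partial_t\bar\rho=\tfrac12\Delta\bar\rho+\bar\rho$ with $\bar\rho_0=u_0$, hence is bounded by $e^{t}\|u_0\|_\infty$; integrating against $\Phi$ gives the bound $M^p\|\Phi\|_{L^1}$ from this block. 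On every other block at least two indices share a family; bounding the ``coincident'' factors of $\eta^\delta$ crudely by $\|\eta\|_\infty\delta^{-d}$ and using that $|\Lambda^1_T|$ has finite moments of all orders, one checks that each such block contributes a quantity of order $\delta^{-O(1)}N^{-1}$, which vanishes as $N\to\infty$ for $p$ and $\delta$ fixed (there are only finitely many blocks for each $p$).

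Next I would transfer the estimate to a subsequential limit $\cP^*$ and then optimize over $p$. Since the time-marginals of the $\xi^N$ are uniformly $L^2(dt)$-bounded, the same holds for any limit point $\xi$, which therefore disintegrates as $\xi(dt,dx)=\xi_t(dx)\,dt$; the functional $\mu\mapsto\int_0^T\!\int\Phi\,(\mu*_x\eta^\delta)^p$ is then lower semicontinuous on the relevant part of $\cM$, so Skorokhod representation (or the portmanteau theorem) yields $\E_{\cP^*}\int_0^T\!\int\Phi\,(\xi*_x\eta^\delta)^p\le M^p\|\Phi\|_{L^1}$ for all $p$ and all small $\delta$. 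Taking $p=2$ and $\delta\to0$ (with lower semicontinuity of the $L^2$-norm) already forces $\xi=u\,dt\,dx$ with $u\in L^2_{\mathrm{loc}}$; for general $p$, since then $u*_x\eta^\delta\to u$ pointwise a.e.\ along a subsequence, Fatou gives $\E_{\cP^*}\int_0^T\!\int_K u^p\le M^p\,T|K|$ for every compact $K$. Finally, Markov's inequality gives $\cP^*\!\big(\|u\|_{L^p([0,T]\times K)}>M(1+\kappa)\big)\le T|K|\,(1+\kappa)^{-p}$ for any $\kappa>0$, which is summable in $p$; Borel--Cantelli then gives $\limsup_p\|u\|_{L^p([0,T]\times K)}\le M(1+\kappa)$ $\cP^*$-a.s., and since $\|u\|_{L^p}\to\|u\|_{L^\infty}$ on the finite measure space $[0,T]\times K$ and $\kappa>0$ is arbitrary, $\|u\|_{L^\infty([0,T]\times K)}\le M$ $\cP^*$-a.s.; a countable exhaustion $K\uparrow\R^d$ completes the proof.

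I expect the main obstacle to be the bookkeeping in the $p$-fold moment expansion: one must verify that the constant $M$ does \emph{not} grow with $p$ (so that $p\to\infty$ is legitimate) and that each ``mixed'' family-block genuinely carries a spare factor $N^{-1}$, with only the $\delta$-dependence — never the $N$-dependence — allowed to blow up. The remaining, essentially routine, point is the measure-theoretic care needed to make sense of $\xi*_x\eta^\delta$ for a general limit point and to justify the semicontinuity used in passing the moment bounds to $\cP^*$.
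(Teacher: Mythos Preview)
Your approach is correct and genuinely different from the paper's. The paper follows \cite{HR}: it applies the It\^o formula to $\int_{\R^d}\psi(f^\delta(t,x))\,dx$ for a convex $\psi$ approximating $(x-k-2)_+$, and then kills the resulting drift term $\E\int\!\!\int f^\delta 1_{\{f^\delta>k\}}$ by a Chernoff bound for sums of i.i.d.\ Poisson variables (the family sizes in the dominating branching Brownian motion), obtaining $\E[f^\delta 1_{\{f^\delta>k\}}]\le Ce^{-CN}$. Your route avoids both the It\^o formula and the large-deviation input: you control $\E\int\Phi (f^{N,\delta})^p$ combinatorially by the same genealogical decomposition already set up in Proposition~\ref{progenitor}, isolate the leading ``all-distinct-progenitors'' block (which, via many-to-one, factorises and yields exactly $(e^T\gamma)^p$), and observe that every other block carries a spare $N^{-1}$; then you send $p\to\infty$. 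This is arguably more economical with the paper's own toolkit and gives the clean explicit bound $e^T\gamma$, whereas the paper's constant $k+2$ is defined through the BBM occupation density. The price is the bookkeeping for general $p$ and the extra limit $p\to\infty$, which the It\^o/Chernoff argument sidesteps by working at the level of a single convex functional.

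One point deserves more than the word ``routine''. The functional $\mu\mapsto\int\Phi\,(\mu*_x\eta^\delta)^p$ is \emph{not} defined on all of $\cM$: the spatial convolution $\mu*_x\eta^\delta$ presupposes a disintegration $\mu(dt,dx)=\mu_t(dx)\,dt$, and weak convergence of $\xi^{N_k}$ in $\cM$ does not automatically give convergence of the time slices. The cleanest fix is to mollify in time as well: set $g^{N,\delta,\sigma}(t,w):=\iint\rho^\sigma(t-s)\eta^\delta(w-x)\,\xi^N(ds,dx)$, which \emph{is} a continuous functional of $\xi^N$ pointwise in $(t,w)$. By Jensen (with $\rho^\sigma$ a probability density), your $p$-th moment bound for $f^{N,\delta}$ transfers to $g^{N,\delta,\sigma}$ up to an $O(\sigma)$ error, and now lower semicontinuity under weak convergence is immediate; let $\sigma\downarrow0$ afterwards using the time-marginal bound $\E[N(t)/N]\le e^T\|u_0\|_{L^1}$ (which you already invoke). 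With this adjustment the argument goes through.
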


\begin{proof}
We adapt the proof strategy of \cite[Lemmas 4.1, 4.2]{HR} to our case. Taking a smooth approximation $\{\psi_n\}_n$ to the function $(x-k-2)_+$, where $k=k(d,T,u_0)$ is a constant to be determined, we fix a $C^2$ function $\psi: \R\to\R_+$ that is non-decreasing, convex, with $\psi(0)=0$, $\psi''$ bounded, and $\psi'=0$ for $x\le k$ and $\psi'\le 1$ for $x>k$. For simplicity, we denote 
\[
f^\delta(t,x)dt:=\left(\xi^N*_x \eta^\delta\right)(t,x)=\frac{1}{N}dt\sum_{j\le N(t)}\eta^\delta\left(t,x-x_j(t)\right),
\]
where $\eta$ is a smooth bump function as in \eqref{def:eta}.

By It\^o formula applied to the process $\int_{\R^d}\psi\left(f^\delta(t,x)\right)dx$, and then taking expectation, the martingale vanishes and we get that 
\begin{align}\label{sub-density}
&\E\int\psi\left(f^\delta(T, x)\right)dx\le \E\int\psi\left(f^\delta(0, x)\right)dx+\E\int_0^T\int\sum_{j\le N(t)}\frac{1}{2}\Delta_{x_j}\psi \left(f^\delta(t,x)\right)dxdt\nonumber\\
&\quad\quad\quad  +\E\int_0^T\int\sum_{j\le N(t)}\left[\psi\left(f^\delta(t,x)+\frac{1}{N}\eta^\delta(x-x_j(t)))\right)-\psi(f^\delta(t,x))\right]\, dxdt\nonumber\\
&\le \E\int\psi(f^\delta(0, x))dx+C\frac{N^{-1}}{\delta^{d+2}}\int|\nabla\eta|^2dx +\E\int_0^T\int\frac{1}{N}\sum_{j\le N(t)}\eta^\delta(x-x_j(t))1_{\{f^\delta(t,x)>k\}}dxdt\nonumber\\
&= \E\int\psi(f^\delta(0, x))dx+C\frac{N^{-1}}{\delta^{d+2}}\int|\nabla\eta|^2dx+\E\int_0^T\int f^\delta(t,x)1_{\{f^\delta(t,x)>k\}}dxdt,
\end{align}
where the analysis of the second term is the same as done in \cite[page 46]{HR}, in particular utilizing the identity \cite[(4.7)]{HR}. We argue that the \abbr{RHS} of \eqref{sub-density} converges to zero as $N\to\infty$ for some constant $k=k(T,d, u_0)$ and every fixed $\delta$. With that we can get the desired conclusion by repeating the argument of \cite[Lemma 4.2]{HR}. In particular, the constant $k+2$ is the upper bound on the density.

To this end, fixing $t$ and $x$ and we consider $\B(x, \delta)$, the open $\delta$-ball around $x$. We denote by $Z(t,x, \delta)$ the number of particles in an auxiliary binary Branching Brownian motion (\abbr{BBM}) of unit branching rate that fall into $\B(x, \delta)$ at time $t$, when starting with a single particle at $t=0$ distributed with density $(\int u_0)^{-1}u_0$. 

Recall that our particle system starts with $N_0=N\int u_0$ number of independent points with density $(\int u_0)^{-1}u_0$, and each particle generates its own lineage, stochastically dominated from above by a \abbr{BBM}. If we denote $Z^{(i)}(t,x, \delta)$ the number of particles in our proliferation system that fall into $\B(x,\delta)$ at time $t$ that come from the $i$-th lineage, for $i=1,2,...,N_0$, then $Z^{(i)}$ are dominated by i.i.d. copies $Z_i$ of $Z$.

Note that each $Z_i$ is a Poisson variable with constant mean $e^T$. By Chernoff's bound for sums of independent Poisson variables, for fixed $\delta, t, x$, some $C=C(d, T, u_0)$, any $N$ and $s>\E Z$ large enough, we have the tail estimates
\begin{align}
&\P\Big(f^\delta(t,x)\ge s\frac{\delta^{-d}||\eta||_\infty}{\int u_0}\Big)\le \P\Big(N_0^{-1}\sum_{i=1}^{N_0}Z^{(i)}\ge s\Big) \nonumber\\
&\le\P\Big(N_0^{-1}\sum_{i=1}^{N_0}Z_i\ge s\Big)\le C\exp\{-CN(s-\E Z)\log(s-\E Z)\}\label{concentration}
\end{align}
where the first inequality comes from unraveling the definition of $f^\delta$.
Further, due to the joint continuity of Brownian motion transition densities, $(t,x)\mapsto\E [Z(t,x, \delta)]$ is continuous. By the boundedness and compactly supportedness of $u_0$, this quantity decays to zero as $|x|\to\infty$, uniformly on $[0,T]$. Further, the limit as $\delta\to0$ of $\delta^{-d}\E [Z(t,x,\delta)]$ exists and is (up to a constant) the occupation density of a \abbr{BBM} at $(t,x)$. Thus, the following constant is universal:
\begin{align*}
k_1(d,T, u_0):=\sup_{t\in[0,T],\; x\in\R^d,\,\delta\in(0,1]}\left\{\delta^{-d}\E [Z(t,x, \delta)]\right\}<\infty.
\end{align*}
In particular, upon choosing 
\[
k=k(d,T,u_0):=2\gamma\vee \frac{2k_1||\eta||_\infty}{\int u_0}
\]
we have by \eqref{concentration} that for some $C=C(k)$ finite,
\[
\E \left[f^\delta(t,x)1_{\{f^\delta(t,x)>k\}}\right]=\int_k^\infty\P(f^\delta(t,x)>u)du
\le Ce^{-CN}.
\]
To conclude the proof, we note that the random variable $f^\delta(t,x)1_{\{f^\delta(t,x)>k\}}$ is dominated from above by $f^\delta(t,x)$ which is uniformly integrable. Indeed, it is simple to check that 
\[
\E \int_0^T\int f^\delta(t,x)dxdt\le e^T\int u_0, \quad \forall\delta, N>0.
\]
Thus, by the dominated convergence theorem, the third term of 
\eqref{sub-density} converges to zero as $N\to\infty$, for every fixed $\delta$. Since $k\ge 2\gamma$ where $\gamma=\|u_0\|_\infty$, we also have the first term of \eqref{sub-density} converging to zero. 
\end{proof}
\\\\
Now we complete the proof of Theorem \ref{thm-main}. Since the sequence $\{\cP^N\}_N$ is tight in $\cP(\cM)$, we can take a weakly converging subsequence $\cP^{N_k}\Rightarrow \cP^*$, and we denote by $\xi(dt,dx)$ a random variable taking values in $\cM$ that is distributed according to the measure $\cP^*$. For any $\iota>0$ we have that 
\begin{align*}
&\P\left(\left|\int u_0(x)\phi(0,x)dx +\left\langle \xi, (\partial_t+\frac{1}{2}\Delta+1)\phi\right\rangle-\left\langle \left(\xi*_x\eta^\delta\right)^2, \phi\right\rangle\right|>3\iota\right)\\
&\le \liminf_{N\to\infty}\P\left(\left|\int u_0(x)\phi(0,x)dx +\left\langle \xi^N, (\partial_t+\frac{1}{2}\Delta+1)\phi\right\rangle-\left\langle \left(\xi^N*_x\eta^\delta\right)^2, \phi\right\rangle\right|>3\iota\right)\\
&\le \liminf_{N\to\infty}\P\left(\left|Q^N(0,\eta(0))-\int \phi(0,x) u_0(x)dx\right|>\iota\right)+\liminf_{N\to\infty}\P\left(|\wt M_T|>\iota\right)\\
&\quad \quad \quad +\liminf_{N\to\infty}\P\left(\left|\left\langle (\xi^N*_x\eta^\delta)^2, \phi\right\rangle- \int_0^T \frac{1}{N^2}\sum_{j,k\le N(t)}\theta^\ep(x_j(t)-x_k(t))\phi(t, x_j(t))\, dt\right|>\iota\right)
\end{align*}
where we used the idenity \eqref{main-ito}.
We already have the first two limits equal to zero, and furthermore, by Proposition \ref{product}, we have that 
\begin{align*}
&\limsup_{\delta\to0}\limsup_{N\to\infty}\E\left|\int_0^T \frac{1}{N^2}\sum_{j,k\le N(t)}\theta^\ep(x_j(t)-x_k(t))\phi(t, x_j(t))\, dt-\left\langle(\xi^N*_x\eta^\delta)^2, \phi\right\rangle\right|=0
\end{align*}
By Lemma \ref{density}, we can write $\xi(dt,dx)=u(t,x)dtdx$ for some (possibly random) function $u(t,x)$ with a deterministic bound. With $u*_x\eta^\delta$ bounded uniformly in $\delta$, we have by the dominated convergence theorem that as $\delta\to 0$,
\begin{align*}
\E\left|\left\langle \phi(t,x), (u*_x\eta^\delta)(t,x)^2\right\rangle-\left\langle \phi(t,x), u(t,x)^2\right\rangle\right|\to 0.
\end{align*}
Taken together, we get that with probability one,
\[
\int u_0(x)\phi(0,x)dx +\left\langle u, (\partial_t+\frac{1}{2}\Delta+1)\phi\right\rangle-\left\langle u^2, \phi\right\rangle=0
\]
holds, whereby $u(t,x)$ is a weak solution of the \abbr{F-KPP} equation, in the sense of \eqref{weak-informal}. Since we will prove in Section \ref{uniqueness} that weak solutions to the \abbr{F-KPP} equation are unique, the limit $\cP^*$ must be unique and is a Dirac measure on $u(t,x)dtdx$. This completes the proof of our main result.

\section{Uniqueness of weak solutions of \abbr{F-KPP} equation}\label{uniqueness}
Denote by $L_{+}^{1}\left(  \mathbb{R}^{d}\right)  $ the set of nonnegative
integrable functions.

As a preliminary, let us recall that we have denoted by $\xi^N\left(
dt,dx\right)  $ the space-time empirical measure and we have remarked that it
has converging subsequences. Thus assume that $\xi^{N_k}\left(  dt,dx\right)
$ weakly converges to a space-time finite measure $\xi\left(  dt,dx\right)  $:%
\[
\lim_{k\rightarrow\infty}\int_{0}^{T}\int_{\mathbb{R}^{d}}K\left(  t,x\right)
\xi^{N_k}\left(  dt,dx\right)  =\int_{0}^{T}\int_{\mathbb{R}^{d}}K\left(
t,x\right)  \xi\left(  dt,dx\right)
\]
for every bounded continuous function $K$. Moreover, we know that there exists%
\[
u\in L^{\infty}\left(  \left[  0,T\right]  ;L^{\infty}\left(  \mathbb{R}%
^{d}\right)  \cap L_{+}^{1}\left(  \mathbb{R}^{d}\right)  \right)
\]
such that
\[
\int_{0}^{T}\int_{\mathbb{R}^{d}}K\left(  t,x\right)  \xi\left(  dt,dx\right)
=\int_{0}^{T}\int_{\mathbb{R}^{d}}K\left(  t,x\right)  u\left(  t,x\right)
dxdt.
\]
Moreover, we also know that the finite measure $\mu_{0}^{N}\left(  dx\right)
$ defined on $\mathbb{R}^{d}$ by  $\mu_{0}^{N}\left(  dx\right)  =\frac{1}%
{N}\sum_{j}\delta_{x_{j}\left(  0\right)  }$ converges weakly to a finite
measure $\mu_{0}\left(  dx\right)  $ with density $u_{0}\in L_{+}^{1}\left(
\mathbb{R}^{d}\right)  $:%
\[
\lim_{n\rightarrow\infty}\int_{\mathbb{R}^{d}}\phi\left(  x\right)  \mu
_{0}^{N}\left(  dx\right)  =\int_{\mathbb{R}^{d}}\phi\left(  x\right)
u_{0}\left(  x\right)  dx
\]
for every bounded continuous function $\phi$. Finally, let us also introduce
the notation $\mu_{t}^{N}\left(  dx\right)  $ for the family of finite
measures on $\mathbb{R}^{d}$, indexed by $t$, such that
\[
\xi^N\left(  dt,dx\right)  =\mu_{t}^{N}\left(  dx\right)  dt
\]
namely, more explicitly, $\mu_{t}^{N}\left(  dx\right)  =\frac{1}{N}\sum
_{j}\delta_{x_{j}\left(  t\right)  }\left(  dx\right)  $. We have, for $K\in
C_{c}^{1,2}\left(  \left[  0,T\right]  \times\mathbb{R}^{d}\right)  $ (compact
support is here a restriction only in space, since we take $\left[
0,T\right]  $ closed),%
\begin{align*}
\left\langle \mu_{T}^{N},K\left(  T\right)  \right\rangle  & =\left\langle
\mu_{0}^{N},K\left(  0\right)  \right\rangle +\int_{0}^{T}\left\langle
g_{t}^{N},\left(  \partial_{t}+\frac{1}{2}\Delta+1\right)  K\left(  t\right)
\right\rangle dt\\
& -N^{-2}\int_{0}^{T}\sum_{j,k}\theta^\epsilon\left(  x_{j}\left(
t\right)  -x_{k}\left(  t\right)  \right)    K\left(  t,x_{j}\left(
t\right)  \right)  dt+M_{T}.
\end{align*}
Notice that we may express the first time integral by means of $\xi^N\left(
dt,dx\right)  $, but we cannot do the same for the term $\left\langle \mu
_{T}^{N},K\left(  T\right)  \right\rangle $, since this is not a space-time
integral. Therefore we have%
\begin{align*}
\left\langle \mu_{T}^{N},K\left(  T\right)  \right\rangle  & =\left\langle
\mu_{0}^{N},K\left(  0\right)  \right\rangle +\int_{0}^{T}\int_{\mathbb{R}%
^{d}}\left(  \partial_{t}+\frac{1}{2}\Delta+1\right)  K\left(  t,x\right)
\xi^N\left(  dt,dx\right)  \\
& -N^{-2}\int_{0}^{T}\sum_{j,k}\theta^\epsilon\left(  x_{j}\left(
t\right)  -x_{k}\left(  t\right)  \right)   K\left(  t,x_{j}\left(
t\right)  \right)  dt+M_{T}.
\end{align*}
Now, from the convergence property of $\xi^{N_k}$ above, we have%
\begin{align*}
& \lim_{k\rightarrow\infty}\int_{0}^{T}\int_{\mathbb{R}^{d}}\left(
\partial_{t}+\frac{1}{2}\Delta+1\right)  K\left(  t,x\right)  \xi^{N_k}\left(
dt,dx\right)  \\
& =\int_{0}^{T}\int_{\mathbb{R}^{d}}\left(  \partial_{t}+\frac{1}{2}%
\Delta+1\right)  K\left(  t,x\right)  u\left(  t,x\right)  dxdt.
\end{align*}
Similarly, we have
\[
\lim_{k\rightarrow\infty}\left\langle \mu_{0}^{N_{k}},K\left(  0\right)
\right\rangle =\int_{\mathbb{R}^{d}}K\left(  0,x\right)  u_{0}\left(
x\right)  dx.
\]
The last term is the one carefully analyzed in the paper. But we cannot state
anything about the convergence of $\left\langle \mu_{T}^{N_{k}},K\left(
T\right)  \right\rangle $, unless we investigate the tightness of the emprical
measures $\mu_{\cdot}^{N}$ in $\cD\left(  \left[  0,T\right]  ;M_{+}\left(
\mathbb{R}^{d}\right)  \right)  $, a fact that we prefer to avoid. 

Therefore the only choice is to assume that
\[
K\left(  T\right)  =0.
\]
We deduce%
\begin{align*}
0  & =\int_{\mathbb{R}^{d}}K\left(  0,x\right)  u_{0}\left(  x\right)  dx\\
& +\int_{0}^{T}\int_{\mathbb{R}^{d}}\left(  \partial_{t}+\frac{1}{2}%
\Delta+1\right)  K\left(  t,x\right)  u\left(  t,x\right)  dxdt\\
& -\lim_{k\rightarrow\infty}N^{-2}\int_{0}^{T}\sum_{j,k}\theta^
\epsilon\left(  x_{j}\left(  t\right)  -x_{k}\left(  t\right)  
\right)  K\left(  t,x_{j}\left(  t\right)  \right)  dt
\end{align*}
(using also the fact that the martingale goes to zero). This motivates the
next definition. 

\begin{definition}\label{weak-sol}
Assume $u_{0}\in L^{\infty}\left(  \mathbb{R}^{d}\right)  \cap L_{+}%
^{1}\left(  \mathbb{R}^{d}\right)  $. We say that
\[
u\in L^{\infty}\left(  \left[  0,T\right]  ;L^{\infty}\left(  \mathbb{R}%
^{d}\right)  \cap L_{+}^{1}\left(  \mathbb{R}^{d}\right)  \right)
\]
is a weak solution of the Cauchy problem%
\begin{align}
\partial_{t}u &  =\frac{1}{2}\Delta u+u\left(  1-u\right)  \label{KPP}\\
u|_{t=0} &  =u_{0}\nonumber
\end{align}
if
\begin{align*}
0  & =\int_{\mathbb{R}^{d}}K\left(  0,x\right)  u_{0}\left(  x\right)
dx+\int_{0}^{T}\int_{\mathbb{R}^{d}}\left(  \partial_{t}+\frac{1}{2}%
\Delta+1\right)  K\left(  t,x\right)  u\left(  t,x\right)  dxdt\\
& -\int_{0}^{T}\int_{\mathbb{R}^{d}}u^{2}\left(  t,x\right)  K\left(
t,x\right)  dxdt
\end{align*}
for all test functions $K\in C_{c}^{1,2}\left(  \left[  0,T\right]
\times\mathbb{R}^{d}\right)  $ such that $K\left(  T,\cdot\right)  =0$.
\end{definition}

A priori, a weak solution does not have continuity properties in time and thus
the value at zero is not properly defined. Implicitly it is defined by the
previous identity, but we can do better.

\begin{lemma}
Given $\phi\in C_{c}^{2}\left(  \mathbb{R}^{d}\right)  $, the measurable
bounded function $t\mapsto\left\langle u\left(  t\right)  ,\phi\right\rangle $
has a continuous modification, with value equal to $\left\langle u_{0}%
,\phi\right\rangle $ at time zero. Moreover, denoting by $t\mapsto\left\langle
u\left(  t\right)  ,\phi\right\rangle $ the continuous modification, we have%
\begin{equation}
\left\langle u\left(  t\right)  ,\phi\right\rangle =\left\langle u_{0}%
,\phi\right\rangle +\frac{1}{2}\int_{0}^{t}\left\langle u\left(  s\right)
,\Delta\phi\right\rangle ds+\int_{0}^{t}\left\langle u\left(  s\right)
\left(  1-u\left(  s\right)  \right)  ,\phi\right\rangle ds\label{weak}%
\end{equation}
for all $t\in\left[  0,T\right]  $.
\end{lemma}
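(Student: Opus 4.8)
The plan is to test the weak formulation of Definition~\ref{weak-sol} against functions with separated variables, $K(t,x)=\chi(t)\phi(x)$, and then invoke the fundamental lemma of the calculus of variations. Since $\phi\in C_c^2(\R^d)$, for every $\chi\in C^1([0,T])$ with $\chi(T)=0$ the product $K=\chi\phi$ lies in $C_c^{1,2}([0,T]\times\R^d)$ and satisfies $K(T,\cdot)=0$, hence is an admissible test function. Set
\[
g(t):=\langle u(t),\phi\rangle,\qquad h(t):=\tfrac12\langle u(t),\Delta\phi\rangle+\langle u(t)(1-u(t)),\phi\rangle;
\]
both are bounded and measurable on $[0,T]$ by Fubini, using $u\in L^\infty([0,T];L^\infty(\R^d)\cap L^1_+(\R^d))$ and the compact support of $\phi$. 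Computing $(\partial_t+\tfrac12\Delta+1)K=\chi'\phi+\chi(\tfrac12\Delta\phi+\phi)$ and substituting into the identity of Definition~\ref{weak-sol} gives
\[
\int_0^T\chi'(t)\,g(t)\,dt=-\chi(0)\langle u_0,\phi\rangle-\int_0^T\chi(t)\,h(t)\,dt
\]
for every such $\chi$.

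Next I would introduce the Lipschitz function $G(t):=\langle u_0,\phi\rangle+\int_0^t h(s)\,ds$, so that $G(0)=\langle u_0,\phi\rangle$ and $G'=h$ almost everywhere. Integrating by parts---legitimate since $G$ is absolutely continuous and $\chi\in C^1$---and using $\chi(T)=0$ yields $\int_0^T\chi'(t)G(t)\,dt=-\chi(0)\langle u_0,\phi\rangle-\int_0^T\chi(t)h(t)\,dt$, which is exactly the right-hand side of the displayed identity. Subtracting, $\int_0^T\chi'(t)\big(g(t)-G(t)\big)\,dt=0$ for all $\chi\in C^1([0,T])$ with $\chi(T)=0$. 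Since every $\zeta\in C([0,T])$ arises as $\zeta=\chi'$ for such a $\chi$ (take $\chi(t)=-\int_t^T\zeta$), the du Bois-Reymond argument forces $g(t)=G(t)$ for Lebesgue-almost every $t\in[0,T]$.

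It then suffices to declare $t\mapsto G(t)$ to be the desired continuous (in fact Lipschitz) modification of $t\mapsto\langle u(t),\phi\rangle$. By construction $G(0)=\langle u_0,\phi\rangle$, and $G(t)=\langle u_0,\phi\rangle+\int_0^t\big(\tfrac12\langle u(s),\Delta\phi\rangle+\langle u(s)(1-u(s)),\phi\rangle\big)ds$ for every $t\in[0,T]$, which is precisely \eqref{weak}.

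I do not expect a genuine obstacle here; the argument is soft. The only points deserving care are: (i) confirming that separated-variable test functions are admissible, which uses the compact support of $\phi$, and that the sole constraint $\chi(T)=0$ leaves $\chi(0)$ free, so that the boundary contribution at $t=0$ survives and pins down the initial value; and (ii) the measurability and boundedness of $g$ and $h$, which follows from Fubini together with the $L^\infty$-in-space-and-time bound on $u$. The passage from $\int_0^T\chi'(t)(g(t)-G(t))\,dt=0$ to $g=G$ a.e.\ is the standard fundamental lemma of the calculus of variations.
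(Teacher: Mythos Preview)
Your proof is correct and follows essentially the same approach as the paper: both test against separated-variable functions $K(t,x)=\chi(t)\phi(x)$ with $\chi(T)=0$ and conclude that $g(t)=\langle u(t),\phi\rangle$ agrees a.e.\ with the continuous function $G(t)=\langle u_0,\phi\rangle+\int_0^t h(s)\,ds$. The only difference is packaging: where you invoke the du~Bois-Reymond lemma directly, the paper instead chooses the explicit family $\chi=\chi_{t_0,h}$ (equal to $1$ on $[0,t_0]$, linear on $[t_0,t_0+h]$, zero afterward) and sends $h\to0$ via Lebesgue's differentiation theorem to reach the same conclusion.
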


\begin{proof}
Given $t_{0}\in\lbrack0,T)$, $h>0$ such that $t_{0}+h\leq T$ and $\phi\in
C_{c}^{2}\left(  \mathbb{R}^{d}\right)  $, consider the function $K\left(
t,x\right)  =\phi\left(  x\right)  \chi\left(  t\right)  $, where $\chi\left(
t\right)  $ is equal to 1 for $t\in\left[  0,t_{0}\right]  $, $1-\frac{1}%
{h}\left(  t-t_{0}\right)  $ for $t\in\left[  t_{0},t_{0}+h\right]  $, zero
for $t\in\left[  t_{0}+h,T\right]  $. This function is only Lipschitz
continuous in time but it is not difficult to approximate it by a $C^{1}$
function of time (this is not really needed, since Lipschitz continuity in
time of the test functions would be sufficient in the definition above). We
have $\partial_{t}K\left(  t,x\right)  =\phi\left(  x\right)  \chi^{\prime
}\left(  t\right)  $ where $\chi^{\prime}\left(  t\right)  $ is equal to zero
outside $\left[  t_{0},t_{0}+h\right]  $ and to $-h^{-1}$ inside, with only
lateral derivatives at $t_{0}$ and $t_{0}+h$. Using this test function above
we get%
\begin{align*}
0  & =\int_{\mathbb{R}^{d}}\phi\left(  x\right)  u_{0}\left(  x\right)
dx+\int_{0}^{T}\chi^{\prime}\left(  t\right)  \int_{\mathbb{R}^{d}}\phi\left(
x\right)  u\left(  t,x\right)  dxdt\\
& +\int_{0}^{T}\chi\left(  t\right)  \left(  \left\langle u\left(  t\right)
,\frac{1}{2}\Delta\phi\right\rangle +\left\langle u\left(  t\right)  \left(
1-u\left(  t\right)  \right)  ,\phi\right\rangle \right)  dt
\end{align*}
namely%
\begin{align*}
0  & =\int_{\mathbb{R}^{d}}\phi\left(  x\right)  u_{0}\left(  x\right)
dx-\frac{1}{h}\int_{t_{0}}^{t_{0}+h}v\left(  t\right)  dt\\
& +\int_{0}^{t_{0}}\left(  \left\langle u\left(  t\right)  ,\frac{1}{2}%
\Delta\phi\right\rangle +\left\langle u\left(  t\right)  \left(  1-u\left(
t\right)  \right)  ,\phi\right\rangle \right)  dt\\
& -\frac{1}{h}\int_{t_{0}}^{t_{0}+h}\left(  t-t_{0}\right)  \left(
\left\langle u\left(  t\right)  ,\frac{1}{2}\Delta\phi\right\rangle
+\left\langle u\left(  t\right)  \left(  1-u\left(  t\right)  \right)
,\phi\right\rangle \right)  dt
\end{align*}
where we have denoted by $v\left(  t\right)  $ the bounded measurable function
$\int_{\mathbb{R}^{d}}\phi\left(  x\right)  u\left(  t,x\right)  dx$. Since
$u\left(  t\right)  $ is bounded, the function equal to $\left(
t-t_{0}\right)  \left(  \left\langle u\left(  t\right)  ,\frac{1}{2}\Delta
\phi\right\rangle +\left\langle u\left(  t\right)  \left(  1-u\left(
t\right)  \right)  ,\phi\right\rangle \right)  $ for $t\in\left[
t_{0},T\right]  $ and equal to zero at $t_{0}$ is continuous at $t=t_0$, hence%
\[
\lim_{h\rightarrow0}\frac{1}{h}\int_{t_{0}}^{t_{0}+h}\left(  t-t_{0}\right)
\left(  \left\langle u\left(  t\right)  ,\frac{1}{2}\Delta\phi\right\rangle
+\left\langle u\left(  t\right)  \left(  1-u\left(  t\right)  \right)
,\phi\right\rangle \right)  dt=0.
\]
By Lebesgue differentiability theorem, the following limit exists for a.e.
$t_{0}$:%
\[
\lim_{h\rightarrow0}\frac{1}{h}\int_{t_{0}}^{t_{0}+h}v\left(  t\right)
dt=v\left(  t_{0}\right)  .
\]
Therefore we get%
\[
v\left(  t_{0}\right)  =\int_{\mathbb{R}^{d}}\phi\left(  x\right)
u_{0}\left(  x\right)  dx+\int_{0}^{t_{0}}\left(  \left\langle u\left(
t\right)  ,\frac{1}{2}\Delta\phi\right\rangle +\left\langle u\left(  t\right)
\left(  1-u\left(  t\right)  \right)  ,\phi\right\rangle \right)  dt
\]
for a.e. $t_{0}$. The right-hand-side of this identity is a continuous
function of $t_{0}$, hence the function $v$ has a continuous modification. And
its value at $t_{0}=0$ is $\int_{\mathbb{R}^{d}}\phi\left(  x\right)
u_{0}\left(  x\right)  dx$.
\end{proof}

\medskip
We can now prove the main result of this section.

\begin{proposition}\label{unique}
Two weak solutions of the Cauchy problem (\ref{KPP}) coincide a.s.
\end{proposition}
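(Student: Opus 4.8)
The plan is to convert the very weak formulation of Definition~\ref{weak-sol} into a Duhamel (mild) formula and then close a Gronwall estimate in $L^1(\R^d)$, using crucially that both solutions are uniformly bounded, so that the nonlinearity $u\mapsto u-u^2$ is Lipschitz on the range where it is evaluated.

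\textbf{Step 1 (mild formulation).} The preceding Lemma already shows that for every weak solution $u$ and every $\phi\in C_c^2(\R^d)$ the map $t\mapsto\langle u(t),\phi\rangle$ has a continuous version obeying \eqref{weak}. By a routine telescoping/Riemann-sum argument over a fine time partition (inserting $\partial_s$ of the frozen time-slice of $\phi$ and using continuity in $t$ together with $u\in L^\infty([0,T];L^1\cap L^\infty)$ and boundedness of $u(1-u)$), this upgrades to: for all $\phi\in C^{1,2}([0,T]\times\R^d)$ with $\phi(t,\cdot)$ of compact support,
\[
\langle u(t),\phi(t)\rangle=\langle u_0,\phi(0)\rangle+\int_0^t\Big\langle u(s),\big(\partial_s+\tfrac12\Delta\big)\phi(s)\Big\rangle\,ds+\int_0^t\big\langle u(s)(1-u(s)),\phi(s)\big\rangle\,ds.
\]
Writing $P_r:=e^{\frac r2\Delta}$ for the heat semigroup, I would then fix $\chi\in C_c^\infty(\R^d)$ and $t\in(0,T]$ and substitute $\phi(s,\cdot):=P_{t-s}\chi$. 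Since $(\partial_s+\tfrac12\Delta)P_{t-s}\chi=0$ the drift term disappears; the only subtlety is that $P_{t-s}\chi$ is not compactly supported, which I would handle by multiplying by cutoffs $\zeta_n\uparrow 1$ and letting $n\to\infty$, the error terms being controlled by $u\in L^\infty_tL^1_x$, boundedness of $u(1-u)$, and the Gaussian decay of $P_{t-s}\chi$ and its first two spatial derivatives (uniformly in $s\in[0,t]$). Using self-adjointness of $P_r$ and the arbitrariness of $\chi$, this yields
\[
u(t)=P_tu_0+\int_0^tP_{t-s}\big(u(s)-u(s)^2\big)\,ds\qquad\text{in }L^1(\R^d),\quad\text{for a.e. }t\in[0,T].
\]

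\textbf{Step 2 (Gronwall).} Let $u_1,u_2$ be two weak solutions with the same datum $u_0$, put $M:=\max_{i=1,2}\|u_i\|_{L^\infty([0,T]\times\R^d)}<\infty$ and $w:=u_1-u_2$. Subtracting the two mild identities and using $u_1^2-u_2^2=(u_1+u_2)w$,
\[
w(t)=\int_0^tP_{t-s}\big((1-u_1(s)-u_2(s))\,w(s)\big)\,ds.
\]
Since $P_r$ is an $L^1$-contraction and $\|1-u_1(s)-u_2(s)\|_{L^\infty}\le 1+2M$, taking $L^1$ norms gives $\|w(t)\|_{L^1}\le(1+2M)\int_0^t\|w(s)\|_{L^1}\,ds$ for a.e.\ $t$; as $s\mapsto\|w(s)\|_{L^1}$ is bounded and measurable, Gronwall's inequality forces $w\equiv 0$, i.e.\ $u_1=u_2$ in $L^\infty([0,T];L^1\cap L^\infty)$. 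The whole argument is deterministic and applies to any pair of functions in the stated solution class, hence in particular to a.e.\ realization $\omega$, giving the claimed almost sure coincidence.

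\textbf{Main obstacle.} The delicate point is the passage in Step~1 to the mild formula: a weak solution carries no regularity or spatial decay beyond membership in $L^\infty([0,T];L^1\cap L^\infty)$, so testing the very weak formulation against the non-compactly supported heat profiles $P_{t-s}\chi$ must be justified carefully, via cutoffs and the Gaussian tails of $P_{t-s}\chi$ and its derivatives. Once the Duhamel identity is available the rest is immediate, precisely because the uniform $L^\infty$ bound turns the cubic term into a globally Lipschitz nonlinearity on the range of the solutions and $e^{\frac r2\Delta}$ contracts $L^1$.
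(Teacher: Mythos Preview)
Your proof is correct and follows the same overall strategy as the paper --- derive the Duhamel/mild formula from the weak formulation and then close a Gronwall estimate using the $L^\infty$ bound on the solutions --- but the implementation differs in both steps.

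For Step~1, the paper obtains the mild identity \eqref{mild} by mollifying in space: setting $u_\epsilon=\sigma_\epsilon\ast u$ and testing \eqref{weak} with $\phi=\sigma_\epsilon^-\ast\psi$ yields a genuine classical equation $\partial_t u_\epsilon=\tfrac12\Delta u_\epsilon+\sigma_\epsilon\ast[u(1-u)]$, for which the passage to the variation-of-constants formula is immediate; one then commutes $\sigma_\epsilon\ast$ with $e^{t\Delta/2}$ and removes the mollifier. Your route instead upgrades \eqref{weak} to time-dependent test functions and then substitutes the heat profile $P_{t-s}\chi$, handling its lack of compact support by spatial cutoffs and Gaussian tails. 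Both arguments are standard and valid; the mollification approach has the advantage of bypassing the two approximation layers (time-dependent test functions, spatial cutoffs) in one stroke, while your approach is perhaps more transparent about where exactly the hypothesis $u\in L^\infty_t L^1_x$ is used.

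For Step~2, the paper runs Gronwall in $L^\infty$ (using $\|e^{r\Delta/2}\|_{L^\infty\to L^\infty}\le 1$), whereas you run it in $L^1$ (using $\|e^{r\Delta/2}\|_{L^1\to L^1}\le 1$). Since weak solutions lie in $L^\infty_t(L^1\cap L^\infty)$ either norm works, and the computations are essentially dual to one another.
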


\begin{proof}
\textbf{Step 1}. Let $u$ be a weak solution. Let $e^{t\frac{1}{2}\Delta}$ be the heat
semigroup, defined for instance on bounded measurable functions. In this step
we are going to prove that
\begin{equation}
u\left(  t\right)  =e^{t\frac{1}{2}\Delta}u_{0}+\int_{0}^{t}e^{\left(
t-s\right)  \frac{1}{2}\Delta}\left[  u\left(  s\right)  \left(  1-u\left(
s\right)  \right)  \right]  ds.\label{mild}%
\end{equation}
Let $\left(  \sigma_{\epsilon}\right)  _{\epsilon\in\left(  0,1\right)  }$ be
classical smooth compact support mollifiers; set%
\[
u_{\epsilon}\left(  t\right)  =\sigma_{\epsilon}\ast u\left(  t\right)  .
\]
Given $\psi\in C_{c}^{1,2}\left(  \mathbb{R}^{d}\right)  $, take $\phi
=\sigma_{\epsilon}^{-}\ast\psi$ in (\ref{weak}), where $\sigma_{\epsilon}%
^{-}\left(  x\right)  =\sigma_{\epsilon}\left(  -x\right)  $. Then, being
\begin{align*}
\left\langle u\left(  t\right)  ,\sigma_{\epsilon}^{-}\ast\psi\right\rangle
&  =\left\langle \sigma_{\epsilon}\ast u\left(  t\right)  ,\psi\right\rangle
=\left\langle u_{\epsilon}\left(  t\right)  ,\psi\right\rangle \\
\left\langle u_{0},\sigma_{\epsilon}^{-}\ast\psi\right\rangle  &
=\left\langle \sigma_{\epsilon}\ast u_{0},\psi\right\rangle \\
\left\langle u\left(  s\right)  ,\Delta\sigma_{\epsilon}^{-}\ast
\psi\right\rangle  &  =\left\langle u\left(  s\right)  ,\sigma_{\epsilon}%
^{-}\ast\Delta\psi\right\rangle =\left\langle u_{\epsilon}\left(  t\right)
,\Delta\psi\right\rangle =\left\langle \Delta u_{\epsilon}\left(  t\right)
,\psi\right\rangle
\end{align*}
we get%
\[
\left\langle u_{\epsilon}\left(  t\right)  ,\psi\right\rangle =\left\langle
\sigma_{\epsilon}\ast u_{0},\psi\right\rangle +\frac{1}{2}\int_{0}%
^{t}\left\langle \Delta u_{\epsilon}\left(  s\right)  ,\psi\right\rangle
ds+\int_{0}^{t}\left\langle \sigma_{\epsilon}\ast\left[  u\left(  s\right)
\left(  1-u\left(  s\right)  \right)  \right]  ,\psi\right\rangle ds
\]
and therefore%
\[
u_{\epsilon}\left(  t\right)  =\sigma_{\epsilon}\ast u_{0}+\frac{1}{2}\int%
_{0}^{t}\Delta u_{\epsilon}\left(  s\right)  ds+\int_{0}^{t}\sigma_{\epsilon
}\ast\left[  u\left(  s\right)  \left(  1-u\left(  s\right)  \right)  \right]
ds
\]
which implies that $t\mapsto u_{\epsilon}\left(  t,x\right)  $ is
differentiable, for every $x\in\mathbb{R}^{d}$. With classical arguments we
can rewrite the equation in the form%
\[
u_{\epsilon}\left(  t\right)  =e^{t\frac{1}{2}\Delta}\sigma_{\epsilon}\ast
u_{0}+\int_{0}^{t}e^{\left(  t-s\right)  \frac{1}{2}\Delta}\sigma_{\epsilon
}\ast\left[  u\left(  s\right)  \left(  1-u\left(  s\right)  \right)  \right]
ds.
\]
Notice that $e^{t\frac{1}{2}\Delta}$ is defined by a convolution with a smooth
kernel, for $t>0$, and thus by commutativity between convolutions we have
$e^{t\frac{1}{2}\Delta}\sigma_{\epsilon}\ast u_{0}=\sigma_{\epsilon}\ast
e^{t\Delta\frac{1}{2}}u_{0}$ and similarly under the integral sign. Hence we
can also write%
\[
u_{\epsilon}\left(  t\right)  =\sigma_{\epsilon}\ast e^{t\frac{1}{2}\Delta
}u_{0}+\int_{0}^{t}\sigma_{\epsilon}\ast e^{\left(  t-s\right)  \frac{1}%
{2}\Delta}\left[  u\left(  s\right)  \left(  1-u\left(  s\right)  \right)
\right]  ds.
\]
Given $\phi\in C_{c}\left(  \mathbb{R}^{d}\right)  $, we deduce%
\[
\left\langle u\left(  t\right)  ,\sigma_{\epsilon}^{-}\ast\phi\right\rangle
=\left\langle e^{t\frac{1}{2}\Delta}u_{0},\sigma_{\epsilon}^{-}\ast
\phi\right\rangle +\int_{0}^{t}\left\langle e^{\left(  t-s\right)  \frac{1}%
{2}\Delta}\left[  u\left(  s\right)  \left(  1-u\left(  s\right)  \right)
\right]  ,\sigma_{\epsilon}^{-}\ast\phi\right\rangle ds.
\]
Since $\sigma_{\epsilon}^{-}\ast\phi$ converges uniformly to $\phi$, from
dominated convergence theorem we deduce%
\[
\left\langle u\left(  t\right)  ,\phi\right\rangle =\left\langle e^{t\frac
{1}{2}\Delta}u_{0},\phi\right\rangle +\int_{0}^{t}\left\langle e^{\left(
t-s\right)  \frac{1}{2}\Delta}\left[  u\left(  s\right)  \left(  1-u\left(
s\right)  \right)  \right]  ,\phi\right\rangle ds
\]
and thus we get (\ref{mild}).

\textbf{Step 2}. Assume that $u^{\left(  i\right)  }$ are two weak solutions.
Then, from (\ref{mild}),
\[
u^{\left(  1\right)  }\left(  t\right)  -u^{\left(  2\right)  }\left(
t\right)  =\int_{0}^{t}e^{\left(  t-s\right)  k\Delta}\left(  u^{\left(
1\right)  }\left(  s\right)  -u^{\left(  2\right)  }\left(  s\right)  \right)
\left(  1-u^{\left(  1\right)  }\left(  s\right)  -u^{\left(  2\right)
}\left(  s\right)  \right)  ds
\]
hence%
\[
\left\Vert u^{\left(  1\right)  }\left(  t\right)  -u^{\left(  2\right)
}\left(  t\right)  \right\Vert _{\infty}\leq\int_{0}^{t}\left\Vert u^{\left(
1\right)  }\left(  s\right)  -u^{\left(  2\right)  }\left(  s\right)
\right\Vert _{\infty}\left(  1+\left\Vert u^{\left(  1\right)  }\left(
s\right)  \right\Vert _{\infty}+\left\Vert u^{\left(  1\right)  }\left(
s\right)  \right\Vert _{\infty}\right)  ds.
\]
Since, by assumption, $u^{\left(  i\right)  }$ are bounded, we deduce
$\left\Vert u^{\left(  1\right)  }\left(  t\right)  -u^{\left(  2\right)
}\left(  t\right)  \right\Vert _{\infty}=0$ by Gronwall lemma.
\end{proof}

\bibliographystyle{plain}
\bibliography{generator_v12_arXiv}

Authors address: Scuola Normale Superiore di Pisa. Piazza Dei Cavalieri 7. Pisa PI 56126. Italia. 

E-mails: \{franco.flandoli, ruojun.huang\}@ sns.it.

\end{document}